\title{Picard groups and the $K$-theory of curves with cuspidal singularities}
\author{Vigleik Angeltveit}
\address{Mathematical Sciences Institute \\
Australian National University \\
Canberra, ACT 0200 \\
Australia}
\newtheorem{theorem}{Theorem}[section]
\newtheorem{thm}[theorem]{Theorem}
\newtheorem{cor}[theorem]{Corollary}
\newtheorem{prop}[theorem]{Proposition}
\theoremstyle{definition}
\newtheorem{defn}[theorem]{Definition}
\newtheorem{remark}[theorem]{Remark}
\numberwithin{equation}{section}
  \newcommand{\cC}{\mathcal{C}}          \newcommand{\cM}{\mathcal{M}}
  \newcommand{\bC}{\mathbb{C}}   \newcommand{\bF}{\mathbb{F}}        \newcommand{\bN}{\mathbb{N}}  
\newcommand{\bQ}{\mathbb{Q}}      \newcommand{\bW}{\mathbb{W}}   \newcommand{\bZ}{\mathbb{Z}}
\newcommand{\sma}{\wedge} 
\newcommand{\xto}{\xrightarrow}
\newcommand{\wt}{\widetilde}
\newcommand{\wh}{\widehat}
\newcommand{\TR}{\textnormal{TR}}
\newcommand{\TC}{\textnormal{TC}}
\newcommand{\cof}{\textnormal{Cof}}
\newcommand{\pic}{\textnormal{Pic}}
\newcommand{\rk}{\textnormal{rk}}
\begin{document}

\begin{abstract}
We calculate the algebraic $K$-theory of the coordinate ring of a planar cuspidal curve over a regular $\bF_p$-algebra, thereby verifying a conjecture due to Hesselholt \cite{He14}. In the course of the proof we compute the Picard group of the homotopy category of $p$-complete genuine $C_{p^n}$-spectra.
\end{abstract}

\maketitle

\section{Introduction}
The main purpose of this paper is to provide a proof of a conjecture of Hesselholt \cite{He14} regarding the algebraic $K$-theory of planar cuspidal curves. A key ingredient in the proof is a calculation of the Picard group of the homotopy category of $p$-complete $C_{p^n}$-spectra, which lets us recognize representation spheres from homological data. The same proof technique lets us compute some other $K$-groups with less geometric input, giving new proofs of some existing results. The Picard group calculation might be of independent interest, and the reader who is mostly interested that is adviced to skip directly to Section \ref{s:picard}.

Since Hesselholt went through the $K$-theory calculation carefully, we will not repeat the details of the calculation here. Instead we will focus on proving a version of \cite[Conjecture B]{He14}, which is the key input to make Hesselholt's calculation work. Note that we do not actually prove Conjecture B but rather a stable $p$-complete $C_{p^n}$-equivariant version of it. The reader should keep a copy of Hesselholt's paper nearby. We prove the following theorem, which was stated in \cite[Theorem A]{He14} with the additional assumption that Conjecture B holds:

\begin{thm} \label{t:main}
Let $k$ be a regular $\bF_p$-algebra and let $A = k[x,y]/(x^b-y^a)$ with $a, b \geq 2$ relatively prime. Suppose $p \nmid a$. Then there is a long exact sequence
\begin{multline*}
 \ldots \to \bigoplus_r \bW_{S/b} \Omega_k^{q-2r}/V_a \bW_{S/ab} \Omega_k^{q-2r} \xto{V_b} \bigoplus_r \bW_S \Omega_k^{q-2r}/V_a \bW_{S/a} \Omega_k^{q-2r} \\ \to K_q(A, \mathfrak{a}) 
 \to \bigoplus_r \bW_{S/b} \Omega_k^{q-2r-1}/V_a \bW_{S/ab} \Omega_k^{q-2r-1} \xto{V_b} \ldots
\end{multline*}
where $S=S(a,b,r) = \{m \in \bN \quad | \quad \ell(a,b,m) \leq r\}$ and $\ell(a,b,m)$ is the number of ways to write $m = ai+bj$ with $i, j > 0$.
\end{thm}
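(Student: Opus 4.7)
The plan is to combine the Picard group calculation of Section~\ref{s:picard} with the analysis carried out by Hesselholt in \cite{He14}: Hesselholt has already reduced the theorem to his Conjecture~B, and the Picard group computation is exactly what is needed to establish the stable $p$-complete $C_{p^n}$-equivariant form of Conjecture~B that his argument actually consumes.

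I would begin by invoking the reduction from $K$-theory to topological cyclic homology. Since $k$ is a regular $\bF_p$-algebra and $A$ is a one-dimensional complete intersection over $k$, the cyclotomic trace $K(A,\mathfrak{a}) \to \TC(A,\mathfrak{a};p)$ is a $p$-adic equivalence by the results of Geisser--Hesselholt, so it suffices to compute $\TC_q(A,\mathfrak{a};p)$. This in turn reduces to an equivariant analysis of the $C_{p^n}$-fixed points of $\mathrm{THH}(A)$ for varying $n$, together with the norm and restriction maps between them.

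Following Hesselholt, I would decompose $\mathrm{THH}(A)$ by weights of the $\bG_m$-action coming from the grading $|x|=a$, $|y|=b$, and analyse each weight piece as a genuine $C_{p^n}$-spectrum. The combinatorial data $\ell(a,b,m)$ and the sets $S(a,b,r)$ appearing in the statement arise directly from counting monomials $x^iy^j$ of total weight $m$ that are divisible by both $x$ and $y$. The content of Conjecture~B is then to identify certain of these weight pieces on the nose with a specific representation sphere smashed with $\mathrm{THH}(k)$. Here the Picard group calculation enters: each such piece is $p$-completely invertible as a $C_{p^n}$-spectrum, hence represents a class in the Picard group computed in Section~\ref{s:picard}, and matching the ranks of its $C_{p^m}$-fixed points for varying $m \le n$ against those of the candidate representation sphere pins the class down uniquely.

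Once the representation spheres are recognised, the long exact sequence in the statement arises from the canonical Verschiebung cofibre sequence for $V_b$ on the weight pieces, assembled over $r$ according to the filtration indexed by $\ell(a,b,m)$; the $V_a$-quotients on each Witt complex factor $\bW_S\Omega_k^{q-2r}$ reflect the $p \nmid a$ hypothesis through the structure of $\mathrm{THH}(k[t]/t^a)$. The main obstacle is clearly the Picard group computation of Section~\ref{s:picard}, which must be strong enough to detect equivariant spheres purely from fixed-point rank data; granted that, matching these invariants to identify the relevant representation spheres and then running Hesselholt's spectral-sequence bookkeeping from \cite{He14} should produce the stated long exact sequence.
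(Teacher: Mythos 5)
Your outline---reduce to $\TC$, decompose $THH(A)$ into weight pieces, identify each piece with Hesselholt's candidate equivariant spectrum, then let the bookkeeping of \cite{He14} produce the long exact sequence---does match the paper's overall strategy, but there are two genuine gaps. First, your claim that each weight piece is $p$-completely invertible as a $C_{p^n}$-spectrum is false, and it fails exactly in the cases that carry the content of the theorem. The weight-$m$ piece is governed by $X(a,b,m)=\Delta^{m-1}/\Sigma(a,b,m)$, and this is a ($p$-complete) sphere only when $a \nmid m$ and $b \nmid m$; when $a \mid m$ or $b \mid m$ the target homotopy type is a spectrum $Y_\beta$ as in Definition \ref{d:Ybeta}, an (iterated) cofiber of maps of induced spectra, which is not invertible, hence defines no class in the Picard group, so Theorem \ref{t:Picard} and Corollary \ref{c:recrepsphere} do not apply to it directly. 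Since one may arrange $p \nmid a$ but may well have $p \mid b$, the case $b \mid m$ cannot be dismissed. The paper needs a separate recognition result (Theorem \ref{t:recognizing}), and its proof is not formal from the Picard computation: one analyses $Z=F(Y_\beta,X)$, whose geometric fixed points are wedges of free and induced spheres, and must show via the fundamental diagram, the Segal conjecture, and the Tate spectral sequence comparison with $Z \sma THH(\bF_p)$ that the lifted class projects to a unit on the summand corresponding to the identity of $\ker(\bZ[C_m/C_{m/b}] \to \bZ[C_m/C_m])$.

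Second, ``matching the ranks of $C_{p^m}$-fixed points'' is neither an available input nor a sufficient one. What the recognition theorems consume is the homology of the geometric fixed points $\Phi^{C_{p^{n-i}}}(X)$ as $\bZ[C_{p^i}]$-modules, and producing this for $X(a,b,m)$ is the main computational content of the paper (Theorem \ref{t:homologyofsigma}): one identifies $\wt{H}_*(\Sigma(a,b,m))$ with the degree-$m$ part of $HH_*(A;\wt{A})$ and then determines the $C_m$-action by exhibiting explicit cycle representatives in the cyclic bar complex, since the action is invisible in the small resolution. Counting monomials of weight $m$, as you propose, only yields graded ranks and the degree shift by $2\ell(a,b,m)$; it cannot distinguish the trivial module $\bZ[C_m/C_m]$ from the kernel modules that occur when $a \mid m$ or $b \mid m$, and that module structure is exactly what feeds into Theorem \ref{t:recognizing}. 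Without this homological input and the non-invertible recognition step, the identification of the weight pieces (Theorem \ref{t:main2}), and hence the long exact sequence, does not follow.
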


Here $\mathfrak{a} = (x,y)$, and $\bW_S(-)$ denotes Witt vectors indexed by the truncation set $S$. The assumption that $p \nmid a$ does not cause any loss of generality, since either $p \nmid a$ or $p \nmid b$. An alternative formulation is that up to extensions the $K$-groups are given by the homology of the chain complex associated to the square
\[ \xymatrix{
 \bigoplus_r \bW_{S/ab} \Omega_k^{*-2r} \ar[r]^-{V_a} \ar[d]_{V_b} & \bigoplus_r \bW_{S/b} \Omega_k^{*-2r} \ar[d]^{V_b} \\
 \bigoplus_r \bW_{S/a} \Omega_k^{*-2r} \ar[r]^-{V_a} & \bigoplus_r \bW_S \Omega_k^{*-2r},
}\]
where we interpret $\Omega^{*-2r}_k$ as zero if $*-2r < 0$.

With additional assumptions on $k$ the answer simplifies further: If in addition $p \nmid b$ then $V_b$ is injective and the $K$-groups are given by a quotient by $V_a$ and $V_b$. Similarly, if $k$ is a perfect field of characteristic $p$ then $V_b$ is injective and $\Omega_k^r = 0$ for $r > 0$ so the result simplifies even more.

We can also say something about what happens over the integers:
\begin{thm} \label{t:mainZ}
Let $A = \bZ[x,y]/(x^b-y^a)$ with $a, b \geq 2$ relatively prime. Then $K_0(A, \mathfrak{a})$ is free of rank $(a-1)(b-1)/2$, and for $i \geq 1$ we have the following:
\begin{itemize}
\item The abelian group $K_{2i-1}(A, \mathfrak{a})$ is finite, of order
\[
 \frac{i!^{(a-1)(b-1)} i! (abi)!}{i^{(a-1)(b-1)/2} M_i (ai)! (bi)!}
\]
\item The abelian group $K_{2i}(A, \mathfrak{a})$ is free of rank $(a-1)(b-1)$.
\end{itemize}
Here $M_i$ is the product of the $(a-1)(b-1)/2$ natural numbers $k$ which satisfy $k \leq abi$ and $\ell(a,b,k) = i$.
\end{thm}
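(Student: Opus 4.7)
The plan is to combine Theorem \ref{t:main} applied prime by prime with a rational computation and to assemble the pieces via the arithmetic fracture pullback square
\[ K(A,\mathfrak{a}) \to K(A,\mathfrak{a})_{\bQ} \times \prod_p K(A,\mathfrak{a})^{\wedge}_p. \]
This is a genuine pullback once $K_*(A,\mathfrak{a})$ is known to be finitely generated in each degree; that reduces via the long exact sequence of the pair to finite generation of $K_*(A)$ for the finite-type $\bZ$-algebra $A$ together with Borel's finite generation of $K_*(\bZ) = K_*(A/\mathfrak{a})$.

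For the rational part, Goodwillie's theorem identifies $K_*(A,\mathfrak{a}) \otimes \bQ$ with relative negative cyclic homology $HN_*(A_{\bQ},\mathfrak{a}_{\bQ})$. Since $A_{\bQ} = \bQ[x,y]/(x^b-y^a)$ is a hypersurface with normalization $\wt A_{\bQ} = \bQ[t]$ via $x=t^a$, $y=t^b$, and the conductor quotient $\wt A/A$ is a free $\bZ$-module of rank $\delta = (a-1)(b-1)/2$ concentrated at $\mathfrak{a}$, a direct Hochschild-Kostant-Rosenberg calculation yields $\mathrm{rk}\, K_0(A,\mathfrak{a}) = \delta$, $\mathrm{rk}\, K_{2i}(A,\mathfrak{a}) = 2\delta = (a-1)(b-1)$ for $i \geq 1$, and $K_{2i-1}(A,\mathfrak{a})$ torsion for $i \geq 1$. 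The local Picard interpretation makes $K_0(A,\mathfrak{a})$ torsion-free of the claimed rank.

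For the $p$-primary part, specializing Theorem \ref{t:main} to $k = \bF_p$ trivializes the higher de Rham-Witt summands (since $\Omega^j_{\bF_p} = 0$ for $j > 0$), so in each degree $q$ only the single summand with $q - 2r = 0$ contributes and the long exact sequence collapses to a short exact sequence between (quotients by $V_a$ of) Witt vector groups $\bW_{S(a,b,r)}(\bF_p)$, whose $p$-adic length is counted by $p$-orbits in the truncation set. To transfer this $\bF_p$-answer to $K(A,\mathfrak{a})^{\wedge}_p$ I would invoke the Clausen-Mathew-Morrow Henselian rigidity equivalence to match $K$ with $TC$ on the Henselian pair $(\wh A_p, (p,\mathfrak{a}))$, use that the relevant $TC$ is insensitive to the reduction modulo $p$ that is built into the proof of Theorem \ref{t:main}, and then excise the known $K(\bZ_p,(p))^{\wedge}_p$ to pass from the ideal $(p,\mathfrak{a})$ to $\mathfrak{a}$. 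The output is that $K_{2i}(A,\mathfrak{a})^{\wedge}_p$ is torsion-free (forcing $K_{2i}$ to be free of the rank found rationally) and that $K_{2i-1}(A,\mathfrak{a})^{\wedge}_p$ is a finite $p$-group of explicit order.

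Assembling, one obtains $K_{2i} \cong \bZ^{(a-1)(b-1)}$, $K_0 \cong \bZ^{\delta}$, and $|K_{2i-1}(A,\mathfrak{a})|$ as the product over $p$ of the $p$-primary orders. The main obstacle will be the final combinatorial step of recognizing this infinite product as the closed form
\[ \frac{i!^{(a-1)(b-1)}\, i!\, (abi)!}{i^{(a-1)(b-1)/2}\, M_i\, (ai)!\, (bi)!}, \]
which amounts to checking, prime by prime, that the $p$-adic valuation of this ratio agrees with the $p$-orbit count in the Witt vector lengths falling out of Theorem \ref{t:main}. This is essentially the numerical content of the formulation in \cite{He14}, and once the $p$-primary answers are in hand it is a bookkeeping exercise.
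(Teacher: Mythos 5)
The $p$-primary part of your plan contains a fatal step: the assertion that ``the relevant $TC$ is insensitive to the reduction modulo $p$,'' i.e.\ that $K(A,\mathfrak{a})^\wedge_p$ can be recovered from Theorem \ref{t:main} with $k=\bF_p$. Relative $\TC$ (equivalently, by Dundas--Goodwillie--McCarthy or Clausen--Mathew--Morrow, $p$-complete relative $K$) changes drastically under reduction mod $p$: rigidity identifies $K(R,I;\bZ/p)$ with $\TC(R,I;\bZ/p)$ for a henselian pair, but it gives no comparison between $\TC$ of the $\bZ$-curve and $\TC$ of the $\bF_p$-curve. Indeed Theorem \ref{t:main} with $k=\bF_p$ (a perfect field, so $\Omega^j_k=0$ for $j>0$ and $V_b$ injective) puts the relative $K$-groups of $\bF_p[x,y]/(x^b-y^a)$ in a single parity, given by quotients of Witt vectors $\bW_S(\bF_p)$, which is qualitatively incompatible with the integral answer (free groups of rank $(a-1)(b-1)$ in positive even degrees, finite groups in odd degrees whose orders involve primes up to $abi$). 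Those factorial orders come from torsion in $\TR^n_*(\bZ;p)$ evaluated on representation spheres --- ultimately from torsion in $\pi_*THH(\bZ)$ --- and cannot be seen from lengths of $\bW_S(\bF_p)$, so your combinatorial endgame would not even have the right inputs. The paper instead computes $\TC(A,\mathfrak{a};p)$ for the integral ring directly: it reduces (as in Hesselholt's Diagram 4.3, using the equivariant identification of $X(a,b,m)$) to a square of groups $\lim_R \bigoplus_m \TR^{\nu_p(m)-\cdots}_{q-\lambda(a,b,m)}(\bZ;p)$, and then invokes \cite[Theorem B]{AnGeHe09}, which determines these $\TR$-groups of $\bZ$ with representation shifts up to extensions and gives injectivity of the Verschiebung maps; the ranks $1,a,b,ab$ and the $p$-adic valuations of the factorials then fall out of the counting of $m$ with $\ell(a,b,m)<i$.

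A secondary gap: your fracture-square setup presupposes that $K_*(A)$ is finitely generated in each degree for the singular finite-type $\bZ$-algebra $A$, which is not a theorem (it is an open finiteness conjecture); Borel only handles the regular piece $K_*(\bZ)$. The paper avoids this by proving finite generation of the relative groups as part of the computation: it shows $\TC_q(A,\mathfrak{a};p)$ is a finitely generated $\bZ_p$-module of the stated rank (free for $q$ even, finite for $q$ odd and positive) and combines this with the rational statement (Proposition \ref{p:negcyc}, via Goodwillie's theorem) to conclude both finite generation and the structure of $K_q(A,\mathfrak{a})$. Your rational step is fine in spirit and close to the paper's, but the $p$-primary strategy needs to be replaced by the integral $\TR$-computation; as it stands the proposed transfer from $\bF_p$ would produce the wrong groups.
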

\noindent
For example, with $a=3$ and $b=5$ this says that $K_1(A, \mathfrak{a})$ has order $3^4 \cdot 5^2 \cdot 2 \cdot 4 \cdot 7$. We can interpret some of the factors in $|K_{2i-1}(A, \mathfrak{a})|$ as follows: $\frac{(abi)!}{M_i} = \prod_{\ell(a,b,k) < i} k$, $(ai)! = \prod_{\ell(a,b,bk) < i} k$, $(bi)! = \prod_{\ell(a,b,ak) < i} k$ and $i! = \prod_{\ell(a,b,abk) < i} k$.

Theorem \ref{t:main} should be compared to the calculation of the $K$-theory of $k[x]/(x^a)$ in Hesselholt and Madsen's papers \cite{HeMa97b} and \cite{HeMa99} and to the calculation of the $K$-theory of $k[x,y]/(xy)$ in Hesselholt's paper \cite{He07}. In fact, our method also gives an alternative proof of \cite[Theorem A]{HeMa99} which avoids the use of trigonometric moment curves by using Theorem \ref{t:homologyoftruncated} as input instead. Theorem \ref{t:mainZ} should be compared to the $K$-theory calculations over the integers in \cite{AnGeHe09} and \cite{AnGe11}.

The proof uses the cyclotomic trace map $K(A, \mathfrak{a}) \to \TC(A, \mathfrak{a}; p)$ to the $p$-typical version of the topological cyclic homology spectrum. We assume the reader is familiar with the definitions of topological Hochschild homology and topological cyclic homology, but we note that because $\TC(A, \mathfrak{a}; p)$ is defined as the homotopy inverse limit (over the restriction and Frobenius maps) of the $C_{p^n}$-fixed point spectrum of $THH(A, \mathfrak{a})$ as $n$ goes to infinity, it suffices to understand $THH(A, \mathfrak{a})$ as a $C_{p^n}$-spectrum.

\subsection*{Outline}
We start in Section \ref{s:THH} by recalling some facts about $THH(A)$ for $A=k[x,y]/(x^b-y^a)$ from \cite{He14}. In Section \ref{s:homology} we give a calculation of the homology groups of $\Sigma(a,b,m)$, together with their $C_m$-action. The homology of $S^1_+ \sma_{C_m} \Sigma(a,b,m)_+$ can be interpreted as part of the Hochschild homology of $A$, and we have lots of tools for studying Hochschild homology. For example, see \cite[Section 5]{He14} for a calculation of $HH_*(A)$. Understanding the homology of $\Sigma(a,b,m)$ is more difficult, especially because we also want to determine the $C_m$-action. We find a homological algebra interpretation of the homology of $\Sigma(a,b,m)$, but understanding the $C_m$-action still requires finding explicit representatives in a certain cyclic bar complex. The proof of Theorem \ref{t:homologyofsigma} is rather tedious, but we do not know how to avoid it.

In Section \ref{s:picard} we compute the Picard group of the homotopy category of $p$-complete $C_{p^n}$-spectra. The Picard group is the group of invertible objects up to isomorphism. For example, if $\alpha$ is a virtual $G$-representation then $S^\alpha$ is invertible with inverse $S^{-\alpha}$. If $\alpha = [V] - [W]$, we write $S^\alpha$ for $\Sigma^{-W} \Sigma^\infty S^V$. A priori $S^\alpha$ is only well defined up to non-canonical isomorphism, but in the context of computing Picard groups we allow ourselves this slight abuse of notation. Given an invertible $G$-spectrum $X$, each $\Phi^H(X)$ is a (non-equivariant) sphere, so $X$ determines a map $f$ from the set of conjugacy classes of subgroups of $G$ to the integers given by $\Phi^H(X) \simeq S^{f(H)}$. We prove that when $G=C_{p^n}$ and we work in the category of $p$-complete $G$-spectra, the map $f$ associated to $X$ determines $X$. This lets us recognize representation spheres.

In Section \ref{s:rec} we use the main result from the previous section to recognize certain more general spectra. Combined with the homology calculation in Section \ref{s:homology} this lets us recognize the stable homotopy type of $\Sigma(a,b,m)$ or $X(a,b,m)$ as a $C_{p^n}$-spectrum, where $n=\nu_p(m)$. And as explained above, that suffices to prove Theorem \ref{t:main}.

Finally, in Section \ref{s:integral} we prove Theorem \ref{t:mainZ}. The proof is similar to the proof of the main theorem in \cite{AnGeHe09}, only the combinatorics change because we have a different family of representations.

\subsection*{Relationship to other work}
After sharing a draft of this paper with Lars Hesselholt we learned that he and Thomas Nikolaus are in the process of writing up a calculation of the algebraic $K$-theory of $A$ in the case when $k$ is a perfect field of characteristic $p$ using a new approach to topological cyclic homology developed by Nikolaus and Scholze \cite{NiSc17}.

\subsection*{Acknowledgements}
I would like to thank Lars Hesselholt for telling me about this open problem in the first place. I would also like to thank Clover May for interesting conversations about equivariant stable homotopy theory.

\section{Topological Hochschild homology of $A$} \label{s:THH}
Following Hesselholt, we start by giving an explicit description of $THH(A, \mathfrak{a})$. Let $\Sigma(a,b,m)$ denote the sub-simplicial complex of $\Delta^{m-1}$ whose set of simplices consists of faces $[v_{r_0},\ldots,v_{r_e}]$ such that each $r_i-r_{i-1}$ as well as $r_0+m-r_e$ can be written as $ai+bj$ with $i, j \geq 0$. This is a $C_m$-equivariant space, where $C_m$ acts by cyclically permuting the set of vertices. This space is relevant because $A=k(\Pi)$ is the pointed monoid algebra on the pointed monoid $\Pi = \{0, 1, t^m \quad | \quad m=ai+bj\}$ with $i, j \geq 0$, so $THH(A) \simeq THH(k) \sma B^{cy}(\Pi)$. The space $B^{cy}(\Pi)$ breaks up as $B^{cy}(\Pi) = \bigvee_m B^{cy}(\Pi)[m]$, where the wedge summand labelled by $m$ is the geometric realization of the sub-simplicial set of $t$-degree exactly $m$ plus the disjoint basepoint $0$. For formal reasons we have $B^{cy}(\Pi)[m] \cong S^1_+ \sma_{C_m} \Sigma(a,b,m)_+$, so this leads us to try to understand the $C_m$-equivariant homotopy type of $\Sigma(a,b,m)$.

Let $X(a,b,m) = \Delta^{m-1}/\Sigma(a,b,m)$. (If $\Sigma(a,b,m) = \varnothing$ we interpret this as $\Delta^{m-1}_+$.) Since $\Delta^{m-1}$ is $C_m$-equivariantly contractible this is homotopy equivalent to the suspension of $\Sigma(a,b,m)$. Then Hesselholt proved Theorem \ref{t:main} under the additional assumption (Conjecture B) that $S^1_+ \sma_{C_m} X(a,b,m)$ has a certain $S^1$-equivariant homotopy type. We recall some of the details here, and the rest in Definition \ref{d:Ybeta} below. Pick $c, d \in \bZ$ such that the matrix $\begin{bmatrix} a & b \\ c & d \end{bmatrix}$ has determinant $1$. Let $\bC(\nu)$ denote the $S^1$-representation $\bC$ with $z \in S^1$ acting by multiplication by $z^\nu$, and use the same notation for the restriction to a $C_m$-representation. Define $\lambda(a,b,m) = \bigoplus\limits_{\nu \in (cm/a, dm/b) \cap \bZ} \bC(\nu)$. Then Hesselholt proved Theorem \ref{t:main} under the assumption that $S^1_+ \sma_{C_m} X(a,b,m)$ is $S^1$-equivariantly homotopy equivalent to a certain space $Y(a,b,m)$ built from the representation sphere $S^{\lambda(a,b,m)}$. For a precise statement, see Theorem \ref{t:main2} below and the definition of $Y(a,b,m)$ in Definition \ref{d:Ybeta} or on p.\ 3 of \cite{He14}.

The way Hesselholt tried to approach the conjecture about the homotopy type of the above space was by studying the stunted regular cyclic polytope $P(a,b,m)$ defined as the convex hull of
\[
 V(a,b,m) = \{ z^\nu \quad | \quad \nu \in J, z \in C_m\} \subset \bC^J,
\]
where $J = [cm/a, dm/b] \cap \bZ$. See \cite[Section 6]{He14} for details. This is similar, at least in spirit, to the proof strategy employed by Hesselholt and Madsen in \cite{HeMa97b} to study the $K$-theory of $k[x]/(x^a)$.

In fact, we now know how to produce a $C_m$-equivariant map $X(a,b,m) \to Y(a,b,m)$. The proof uses Fourier analysis. Unfortunately, we have been unable to prove that this map is an equivalence. Since stunted regular cyclic polytopes are interesting in their own right, we give the details in \cite{AnFourier}.

Instead, we will use methods from equivariant stable homotopy theory. We will always work with \emph{genuine} equivariant spectra. The point-set level model is not important, but for concreteness we choose to work in the category of orthogonal $G$-spectra. We are mostly interested in $G=C_{p^n}$, but parts of our paper apply to all finite groups.

We prove the following more technical result:

\begin{thm} \label{t:main2}
Given $(a,b,m)$ as above, let $n=\nu_p(m)$ and consider $X(a,b,m) = \Delta^{m-1}/\Sigma(a,b,m)$ as a $C_{p^n}$-space via the inclusion $C_{p^n} \subset C_m$. Then up to $p$-completion the suspension spectrum $\Sigma^\infty X(a,b,m)$ is $C_{p^n}$-equivariantly homotopy equivalent to $\Sigma^\infty Y(a,b,m)$.
\end{thm}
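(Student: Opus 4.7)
The plan is to deduce the theorem by combining three ingredients developed in the next three sections: the $C_m$-equivariant homology calculation of $\Sigma(a,b,m)$ in Section \ref{s:homology}, the computation of the Picard group of $p$-complete $C_{p^n}$-spectra in Section \ref{s:picard}, and the resulting recognition principle for certain non-invertible spectra in Section \ref{s:rec}. The overall strategy is to show that $\Sigma^\infty X(a,b,m)$ satisfies the hypotheses of this recognition principle with equivariant invariants matching those of $\Sigma^\infty Y(a,b,m)$; once these invariants are shown to agree, the principle produces the asserted $p$-complete equivalence.

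The first step is to describe the geometric fixed points on both sides. On the $Y$-side the spectrum is assembled from the representation sphere $S^{\lambda(a,b,m)}$, so the standard formula $\Phi^H \Sigma^\infty S^V \simeq \Sigma^\infty S^{V^H}$ applied to $\lambda(a,b,m)^{C_{p^k}} = \bigoplus_{\nu \in (cm/a,\,dm/b)\cap p^k\bZ} \bC(\nu)$ determines everything directly. On the $X$-side, the fixed points of the standard $C_m$-action on $\Delta^{m-1}$ form a simplex on $m/p^k$ vertices, and the sub-simplicial complex $\Sigma(a,b,m)^{C_{p^k}}$ has an explicit combinatorial description in terms of $C_{p^k}$-invariant vertex subsets. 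Theorem \ref{t:homologyofsigma} then computes $H_*(\Sigma(a,b,m);\bZ)$ as a $\bZ[C_m]$-module, from which I would read off the homology of each $X(a,b,m)^{C_{p^k}}$ together with its residual $C_m/C_{p^k}$-action; an elementary lattice-point count matches the resulting ranks with $\dim_\bR \lambda(a,b,m)^{C_{p^k}}$.

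The final step is to invoke the recognition theorem of Section \ref{s:rec} to upgrade the matching of geometric fixed-point data into a genuine $C_{p^n}$-equivariant equivalence after $p$-completion. This rests on the Picard group statement that an invertible $p$-complete $C_{p^n}$-spectrum is determined by the integer-valued function $H \mapsto \dim \Phi^H$; the recognition principle extends this rigidity to the slightly larger class of spectra of which $\Sigma^\infty X(a,b,m)$ is an example. The main obstacle is ensuring that the $C_m$-action on $H_*(\Sigma(a,b,m))$, and not merely the underlying abelian groups, matches the representation-theoretic prediction coming from $\lambda(a,b,m)$, since otherwise the recognition principle cannot distinguish competing stable homotopy types. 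This equivariant precision is the content of Theorem \ref{t:homologyofsigma}, whose proof requires explicit cycle representatives in the cyclic bar complex and is the most technically demanding part of the argument. A secondary subtlety is that $\Sigma^\infty X(a,b,m)$ is generally not itself invertible, so the bare Picard-group statement is insufficient and the extension to non-invertible spectra in Section \ref{s:rec} is essential.
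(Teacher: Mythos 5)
Your proposal follows essentially the same route as the paper: Theorem \ref{t:main2} is proved there precisely by feeding the $\bZ[C_m]$-module computation of Theorem \ref{t:homologyofsigma} into the recognition principle of Theorem \ref{t:recognizing}, which rests on the Picard-group rigidity of Theorem \ref{t:Picard}, and you correctly identify both that the module structure (not just ranks) is the essential input and that the non-invertibility of $\Sigma^\infty X(a,b,m)$ is why Corollary \ref{c:recrepsphere} alone does not suffice.

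One step is misstated, though the repair is within reach of your own setup. You say you would ``read off'' the homology of each $X(a,b,m)^{C_{p^k}}$ from $H_*(\Sigma(a,b,m))$ as a $\bZ[C_m]$-module; this is not a valid inference, since the module structure on homology does not determine the homology of fixed-point spaces. The correct move is the one your ``explicit combinatorial description'' gestures at: a $C_{p^k}$-fixed point of $\Delta^{m-1}$ is supported on a union of $C_{p^k}$-orbits of vertices, and the cyclic gaps of such an orbit union in $\bZ/m$ repeat the cyclic gaps of the corresponding subset of $\bZ/(m/p^k)$, so that $\Sigma(a,b,m)^{C_{p^k}} \cong \Sigma(a,b,m/p^k)$ and hence $\Phi^{C_{p^k}}\Sigma^\infty X(a,b,m) \simeq \Sigma^\infty X(a,b,m/p^k)$. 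On the other side, $\lambda(a,b,m)^{C_{p^k}} \cong \lambda(a,b,m/p^k)$ (the same $c,d$ work), so the geometric fixed points of $Y(a,b,m)$ are again of the form treated in Definition \ref{d:Ybeta}. With these identifications, Theorem \ref{t:homologyofsigma} applied with $m/p^k$ in place of $m$ supplies exactly the $\bZ[C_{p^{n-k}}]$-module data required by the hypotheses of Theorem \ref{t:recognizing}, and the argument closes as you describe; one must also check that the homological degrees match (that the number of integers in $(cm/a,dm/b)$ is governed by $\ell(a,b,m)$), which is part of the comparison with $Y_{\lambda(a,b,m)}$ recorded after Theorem \ref{t:homologyofsigma}.
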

\noindent
Here $c$ and $d$ are as above, and $Y(a,b,m) = Y_{\lambda(a,b,m)}$ is defined in Definition \ref{d:Ybeta} below.

Given Theorem \ref{t:main2}, Theorem \ref{t:main} follows from \cite{He14} using Theorem \ref{t:main2} as a stand-in for \cite[Conjecture B]{He14}. The point is that Hesselholt's proof does not use the full force of Conjecture B: First, it suffices to consider the \emph{stable} $C_m$-equivariant homotopy type of $X(a,b,m)$ rather than its unstable $C_m$-equivariant homotopy type. Second, it suffices to consider the \emph{$C_{p^n}$-equivariant} stable homotopy type of $X(a,b,m)$ rather than its $C_m$-equivariant stable homotopy type, because we are interested in the $p$-typical version of topological cyclic homology only. And third, it suffices to consider the spectrum $\Sigma^\infty X(a,b,m)$ up to $p$-completion only because $THH(k)$ is $p$-complete.

\section{The homology of $\Sigma(a,b,m)$} \label{s:homology}
We will always let $x=t^a$ and $y=t^b$, so $k[x,y]/(x^b-y^a) = k \{ t^m \quad | \quad m=ai+bj\}$ with $i, j \geq 0$. In this section we will work over $\bZ$, so we define $A = \bZ[x,y]/(x^b-y^a)$. As in the previous section, let $\Sigma(a,b,m)$ be the sub-simplicial complex of $\Delta^{m-1}$ whose simplices consists of faces $[v_{r_0},\ldots,v_{r_e}]$ such that each $r_i-r_{i-1}$ as well as $r_0+m-r_e$ can be written as a sum of $a$'s and $b$'s. We write an $e$-simplex of $\Sigma(a,b,m)$ as
\[
 \sigma = t^{i-1} t_0 t^{k_0-i} | t^{k_1} | \ldots | t^{k_e},
\]
where $1 \leq i \leq k_0$, each $k_q$ can be written as a sum of $a$'s and $b$'s, and $\sum\limits k_q = m$. We recover the original description of the $e$-simplex by setting $k_0 = r_0 + m - r_e$, $i = m - r_e$ and $k_j = r_j - r_{j-1}$ for $j \geq 1$.

On the associated chain complex, the preferred generator $\tau$ of $C_m$ sends a non-degenerate simplex $\sigma$ to
\[
 \tau \sigma = \begin{cases} t^{i-2} t_0 t^{k_0-i+1} | t^{k_1} | \ldots | t^{k_e} \quad & \textnormal{if $i > 1$} \\ (-1)^e t^{k_e-1} t_0 | t^{k_0} | \ldots | t^{k_{e-1}}  \quad & \textnormal{if $i = 1$} \end{cases}
\]

As explained above, $B^{cy}(\Pi)[m] \simeq S^1_+ \sma_{C_m} \Sigma(a,b,m)_+$. But we can also model $\Sigma(a,b,m)_+$ directly using a version of the cyclic bar construction. To do this, we define $\wt{\Pi} = \{0, t^{i-1} t_0 t^{k-i} \quad | \quad t^k \in \Pi\}$. This is a pointed $\Pi$-biset, using the obvious maps $\Pi \times \wt{\Pi} \to \wt{\Pi}$ and $\wt{\Pi} \times \Pi \to \wt{\Pi}$ which multiply by powers of $t$ on the left or right. Then $\Sigma(a,b,m)_+ \cong B^{cy}(\Pi; \wt{\Pi})[m]$.

At the level of algebras and bimodules, we define an $A$-bimodule $\wt{A}$ by
\[
 \wt{A}_k = \begin{cases} \bigoplus\limits_{i=1}^k \bZ\{t^{i-1} t_0 t^{k-i}\} \quad & \textnormal{if $k$ is a sum of $a$'s and $b$'s} \\ 0 & \textnormal{otherwise} \end{cases}
\]
Then there is an isomorphism between the associated chain complex of $\Sigma(a,b,m)_+$ (which computes the reduced homology of $\Sigma(a,b,m)_+$) and the degree $m$ part of the chain complex $B^{cy}(A; \wt{A})$ computing $HH_*(A; \wt{A})$. This gives us a way to study the homology groups of $\Sigma(a,b,m)$ using homological algebra, and by working just a little bit harder we can also determine the $C_m$-action on $H_*(\Sigma(a,b,m))$.

\begin{remark}
The theory of cyclic sets does not give an $S^1$-action, or even a $C_m$-action, on $B^{cy}(A; \wt{A})[m]$. This means we have to work with explicit representatives to compute the action.
\end{remark}

\begin{thm} \label{t:homologyofsigma}
The homology groups of $\Sigma(a,b,m)$ as a $\bZ[C_m]$-module are given as follows. If $m$ can be written as a sum of $a$'s and $b$'s, so $\Sigma(a,b,m) \neq \varnothing$, then:
\begin{enumerate}
\item If $a \nmid m$ and $b \nmid m$ then $\wt{H}_q(\Sigma(a,b,m))$ is isomorphic to $\bZ[C_m/C_m]$ in dimension $q=2\ell(a,b,m)-1$ and zero otherwise.
\item If $a \mid m$ and $b \nmid m$ then $\wt{H}_q(\Sigma(a,b,m))$ is isomorphic to the kernel of
\[
 \bZ[C_m/C_{m/a}] \to \bZ[C_m/C_m]
\]
in dimension $q=2\ell(a,b,m)$ and zero otherwise.
\item If $a \nmid m$ and $b \mid m$ then $\wt{H}_q(\Sigma(a,b,m))$ is isomorphic to the kernel of
\[
 \bZ[C_m/C_{m/b}] \to \bZ[C_m/C_m]
\]
in dimension $q=2\ell(a,b,m)$ and zero otherwise.
\item If $a \mid m$ and $b \mid m$ then $\wt{H}_q(\Sigma(a,b,m))$ is isomorphic to the iterated kernel of the square
\[ \xymatrix{
 \bZ[C_m/C_{m/ab}] \ar[r] \ar[d] & \bZ[C_m/C_{m/a}] \ar[d] \\
 \bZ[C_m/C_{m/b}] \ar[r] & \bZ[C_m/C_m]
} \]
in dimension $q=2\ell(a,b,m)+1$ and zero otherwise.
\end{enumerate}
\end{thm}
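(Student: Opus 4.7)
The plan is to use the identification $\wt H_*(\Sigma(a,b,m)) \cong HH_*(A; \wt A)[m]$ from the preceding discussion and compute the right-hand side as a $\bZ[C_m]$-module. The cyclic action on $B^{cy}(A; \wt A)[m]$ translates directly to the $C_m$-action on $\Sigma(a,b,m)$, so the task reduces to computing the weight-$m$ part of Hochschild homology with its $\bZ[C_m]$-structure.

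For the underlying abelian groups, I would exploit the presentation $A = R/(x^b - y^a)$ with $R = \bZ[x,y]$ to build a small $A$-bimodule resolution of $A$. Because $(x_1 - x_2, y_1 - y_2)$ is not a regular sequence in $A \otimes A$ --- the factorization $y_1^a - y_2^a = (y_1 - y_2)(y_1^{a-1} + \cdots + y_2^{a-1})$ produces a nontrivial syzygy with $x_1^b - x_2^b$ modulo the hypersurface relations --- the naive Koszul complex must be extended to an infinite periodic resolution governed by the hypersurface nature of $A$. Applying the functor $(-)\otimes_{A\otimes A}\wt A$ yields a chain complex of copies of $\wt A$ whose differentials are built from the commutators $[x, -]$ and $[y, -]$ together with polynomial syzygy operators. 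In weight $m$ this becomes a finite complex of free abelian groups, and the differentials decompose according to which of $a, b$ divide $m$. This naturally produces the four cases: each decomposition $m = ai + bj$ with $i, j > 0$ contributes a pair of Koszul-type classes in consecutive degrees that assemble into a class in degree $2\ell(a,b,m)$, and the divisibility conditions control whether the "boundary" decompositions with $i = 0$ or $j = 0$ further contribute, shifting the concentration degree by $\pm 1$ as recorded in the theorem statement.

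For the $C_m$-action I would lift the homology classes to explicit cycles in $B^{cy}(A; \wt A)[m]$. When $a | m$, the degenerate decomposition $m = a + a + \cdots + a$ with $m/a$ parts gives rise to a family of simplices in $\Sigma(a,b,m)$ whose collective $C_m$-orbit has stabilizer $C_{m/a}$; the cyclic shifts of this "all-$a$" pattern generate a $C_m$-set isomorphic to $C_m/C_{m/a}$, and the augmentation kernel appears because the sum of all cyclic shifts bounds in the Hochschild complex. Symmetric reasoning for $b | m$ yields case (3), and in the combined case (4) both "all-$a$" and "all-$b$" contributions are present along with the further degeneration $m = ab \cdot (m/ab)$, producing the iterated kernel of the square.

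The main obstacle is identifying the $C_m$-action at the level of representatives. The periodic bimodule resolution is not canonically $C_m$-equivariant --- the cyclic action is a feature of the bar complex, not of the resolution --- so the quasi-isomorphism must be chosen with care and the abstract homology classes lifted to explicit cycles in $B^{cy}(A; \wt A)[m]$. Tracking these lifts through all possible decompositions $m = ai + bj$, and verifying that the transfer between the small resolution and the bar complex respects the cyclic structure, is a finite but lengthy combinatorial computation. This is the tedium the author warns about and which appears unavoidable with the present method.
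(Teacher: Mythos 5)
Your proposal follows essentially the same route as the paper: the additive structure is computed from the hypersurface-type resolution $A^e \otimes \Lambda(dx,dy) \otimes \Gamma(z)$ (your ``Koszul complex extended periodically by the syzygy'' is exactly this divided-power extension, taken from Hesselholt's Section 5), tensored over $A^e$ with $\wt{A}$ and analyzed in weight $m$ according to the divisibility of $m$ by $a$ and $b$, while the $C_m$-action is obtained by lifting generators to explicit cycles in $B^{cy}(A;\wt{A})[m]$ precisely because the small resolution carries no cyclic structure. What remains in your sketch --- writing down the explicit generators, checking they are cycles and not boundaries, and computing $\tau$ on them modulo boundaries via the shuffle product --- is the same tedious verification the paper carries out, so the approaches coincide.
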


The four cases can of course be given a unified description as the homology groups of the associated chain complex of the square in Case (4) starting in dimension $2\ell(a,b,m+a+b)-1$, with the convention that $\bZ[C_m/C_{m/d}] = 0$ if $d \nmid m$. The reduced homology groups of $X(a,b,m)$ are given by shifting the reduced homology groups of $\Sigma(a,b,m)$ up by one.

\begin{proof}
By the description of $H_*(\Sigma(a,b,m))$ as the degree $m$ part of the homology of $B^{cy}(A; \wt{A})$, we can build on the calculation given in \cite[Section 5]{He14}. In particular, we can start with the differential graded $A^e$-algebra
\[
 R(A) = A^e \otimes \Lambda(dx, dy) \otimes \Gamma(z),
\]
with $\delta(dx) = x \otimes 1 - 1 \otimes x$, $\delta(dy) = y \otimes 1 - 1 \otimes y$, and
\[
 \delta(z) = \sum_{u=1}^{b} x^{u-1} \otimes x^{b-u} \cdot dx - \sum_{v=1}^{a} y^{v-1} \otimes y^{a-v} \cdot dy.
\]
(We write $\delta$ for the differential, since $d$ is already used.) This is a projective resolution of $A$ as an $A^e$-module, so by tensoring over $A^e$ with $\wt{A}$ we get a chain complex
\[
 \wt{R}(A) = \wt{A} \otimes \Lambda(dx, dy) \otimes \Gamma(z)
\]
computing $HH_*(A, \wt{A})$. Here $\delta(\tilde{a} dx) = x\tilde{a} - \tilde{a}x$, $\delta(\tilde{a} dy) = y \tilde{a} - \tilde{a} y$, and
\[
 \delta(\tilde{a} z) = \sum_{u=1}^{b} x^{u-1} \tilde{a} x^{b-u} \cdot dx - \sum_{v=1}^{a} y^{v-1} \tilde{a} y^{a-v} \cdot dy. 
\]
Now we can compute $\wt{H}_*(\Sigma(a,b,m))$ in each of the cases in the theorem:

\begin{enumerate}
\item Write $t^m = x^i y^{j+ar} = x^{i+br} y^j$ with $0 < i < b$ and $0 < j < a$. Then $r=\ell(a,b,m)-1$. Define $\tilde{x} = \sum_{u=1}^a t^{u-1} t_0 t^{a-u}$ and $\tilde{y} = \sum_{v=1}^b t^{v-1} t_0 t^{b-v}$. These play the role of $ax$ and $by$ in the corresponding calculation of $HH_*(A)$. Then the homology of $\Sigma(a,b,m)$ in dimension $2\ell(a,b,m)-1$ is generated by
\[
 x^{i-1} y^{j-1} (\tilde{y} dx - \tilde{x} dy) z^{[r]}.
\]

To see that this element is a cycle, first suppose $r=0$ and $i=j=1$. Then
\begin{multline*}
 \delta( \tilde{y} dx - \tilde{x} dy ) = \sum_{v=1}^b t^{a+v-1} t_0 t^{b-v} - \sum_{v=1}^b t^{v-1} t_0 t^{a+b-v} \\
 - \sum_{u=1}^a t^{b+u-1} t_0 t^{a-u} + \sum_{u=1}^a t^{u-1} t_0 t^{a+b-u}.
\end{multline*}
Combining the first and fourth sums, and the second and third sums, gives $\sum\limits_{w=1}^{a+b} t^{w-1} t_0 t^{a+b-w} - \sum\limits_{w=1}^{a+b} t^{w-1} t_0 t^{a+b-w} = 0$. If $i$ and $j$ are arbitrary, this just adds a factor of $x^{i-1} y^{j-1}$ on the left everywhere.

If $r > 0$, the part of $\delta(x^{i-1} y^{j-1} (\tilde{y} dx - \tilde{x} dy) z^{[r]})$ with a factor of $z^{[r]}$ cancels as before, and the part with a factor of $z^{[r-1]}$ is given by $x^{i-1} y^{j-1}$ times
\begin{multline*}
 - \sum_{v'=1}^a \sum_{v=1}^b t^{b(v'-1)+v-1} t_0 t^{b(a-v')+b-v} dx dy z^{[r-1]} \\
 + \sum_{u'=1}^b \sum_{u=1}^a t^{a(u'-1)+u-1} t_0 t^{a(b-u')+a-u} dx dy z^{[r-1]}.
\end{multline*}
Again we see that the sums cancel since they are both given by
\[
 \sum\limits_{w=1}^{ab} t^{w-1} t_0 t^{ab-w} dx dy z^{[r-1]}.
\]

To see that no multiple of this class is a boundary, note that if $i=j=1$ then there are no classes in homological degree $2r+2$ and $t$-degree $m$. If $i > 1$ or $j > 1$ then there are some such classes, given by $t^w t_0 t^{w'} dx dy z^{[r]}$, but $\delta$ applied to such a class with $w'$ maximal has a term $t^w t_0 t^{w'+b} dx z^{[r]}$ which is not a term of our cycle and cannot cancel with anything else.

To see that there is no more homology in dimension $2\ell(a,b,m)-1$, note that any cycle
\[
 \sum \alpha_w t^w t_0 t^{w'} dx z^{[r]} + \sum \beta_w t^w t_0 t^{w'} dy z^{[r]} + \textnormal{terms with $z^{[r']}$ for $r'<r$}
\]
with $w+w'=a(i-1)+bj-1$ in the first sum and $w+w'=ai+b(j-1)-1$ in the second sum, is homologous to such a sum where $w' < b$ in the first sum. If the second sum has any terms with $w' \geq a$, $\delta$ applied to such a class with $w'$ maximal has a term which cannot cancel with anything else so this class cannot be a cycle. This is the key reduction; we omit the rest of the details.

\item Write $t^m = x^i y^{ar} = x^{i+br}$ with $0 < i < b$. Then $r=\ell(a,b,m)$. For each $1 \leq q \leq a-1$ we then have a generator
\[
 x^{i-1} t^{q-1} (t_0 t - t t_0) t^{a-1-q} z^{[r]} + x^{i-1} t^{q-1} \tilde{w} t^{a-1-q} dx dy z^{[r-1]},
\]
where
\[
 \tilde{w} = \sum t^e t_0 t^{(a-1)(b-1)-e} - \sum t^{e+1} t_0 t^{(a-1)(b-1)-(e+1)},
\]
and the two sums are over all $e$ that can be written as a sum of $a$'s and $b$'s, with $0 \leq e \leq (a-1)(b-1)$ in the first sum and $0 \leq e \leq (a-1)(b-1)-1$ in the second sum. If $r=0$, omit the terms with $z^{[r-1]}$.

To see that this is a cycle, we calculate its boundary. We omit the factors $x^{i-1}t^{q-1}$ and $t^{a-1-q}$ as the play no role in the calculation. Then the boundary of the first term is given by
\begin{multline*}
 \Big( \sum t^e t_0 t^{(a-1)(b-1)+b-e} - \sum t^{e+1} t_0 t^{(a-1)(b-1)+b-(e+1)} \Big) dx z^{[r-1]} \\ 
 - \Big( \sum t^e t_0 t^{(a-1)(b-1)+a-e} - \sum t^{e+1} t_0 t^{(a-1)(b-1)+a-(e+1)} \Big) dy z^{[r-1]},
\end{multline*}
where the first two sums are over those $e$ which are a multiple of $a$ and the last two sums are over those $e$ which are a multiple of $b$. The boundary of the second term is
\begin{multline*}
 \Big( -\sum t^e t_0 t^{(a-1)(b-1)+b-e} + \sum t^{e+1} t_0 t^{(a-1)(b-1)+b-(e+1)} \Big) dx z^{[r-1]} \\
 - \Big( -\sum t^{e+b} t_0 t^{(a-1)(b-1)-e} + \sum t^{e+b+1} t_0 t^{(a-1)(b-1)-(e+1)} \Big) dx z^{[r-1]} \\
 + \Big( \sum t^e t_0 t^{(a-1)(b-1)+a-e} - \sum t^{e+1} t_0 t^{(a-1)(b-1)+a-(e+1)} \Big) dy z^{[r-1]} \\
 - \Big( \sum t^{e+a} t_0 t^{(a-1)(b-1)-e} - \sum t^{e+a+1} t_0 t^{(a-1)(b-1)-(e+1)} \Big) dy z^{[r-1]}
\end{multline*}
where the sums are as in the definition of $\tilde{w}$. Now a combinatorial argument shows that everything cancels. For example, consider the terms with a factor of $dx z^{[r-1]}$. We have a total of $6$ sums with a factor of $dx z^{[r-1]}$, with $2$ coming from the boundary of the first term and $4$ from the boundary of the second term. Then consider all the terms starting with $t^e t_0$.
\begin{itemize}
\item If $e \equiv 0 \mod a$, the terms in sums $1$ and $3$ cancel for $e < a(b-2)$, the terms in sums $4$ and $6$ cancel for $e < a(b-2)$, and the terms in sums $1$ and $6$ cancel for $e \geq a(b-2)$.
\item If $e \equiv 1 \mod a$, the terms in sums $2$ and $4$ cancel for $e < a(b-2)$, the terms in sums $3$ and $5$ cancel for $e < a(b-2)$, and the terms in sums $2$ and $5$ cancel for $e \geq a(b-2)$.
\item If $e \neq 0, 1 \mod a$, the terms in sums $3$ and $5$ cancel for $e < a(b-2)$, the terms in sums $4$ and $6$ cancel for $e < a(b-2)$, and the terms in sums $5$ and $6$ cancel for $e \geq a(b-2)$.
\end{itemize}

The proof that the terms with a factor of $dy z^{[r-1]}$ cancel is identical. The proof that none of these classes are boundaries, and that this accounts for all the homology, is similar to the previous part.

\item Write $t^m = x^{br} y^j = y^{j+ar}$ with $0 < j < a$. Then $r=\ell(a,b,m)$. For each $1 \leq q \leq b-1$ we then have a generator
\[
 y^{j-1} t^{q-1}(t_0 t - t t_0) t^{b-1-q} z^{[r]} + y^{j-1} t^{q-1} \tilde{w} t^{b-1-q} dx dy z^{[r-1]},
\]
where $\tilde{w}$ is as above. If $r=0$, omit the terms with $z^{[r-1]}$. The proof is identical to the previous part.

\item Let $r=\ell(a,b,m)$. For each $1 \leq q \leq (a-1)(b-1)$ we then have a generator
\[
 t^{q-1} \big( \tilde{y}dx - \tilde{x} dy \big) t^{(a-1)(b-1)-q} z^{[r]}.
\]
The proof that these classes generate the homology is similar to the previous parts.
\end{enumerate}

Understanding the $C_m$-action takes a little bit more work, because the model $\wt{A} \otimes \Lambda(dx,dy) \otimes \Gamma(z)$ does not give a transparent way of seeing the action. Instead we use that in $B^{cy}(A; \wt{A})$, $dx$ is represented by $1 \otimes x$, $dy$ is represented by $1 \otimes y$, and $z$ is represented by
\[
 \sum_{u=1}^{b-1} x^{u-1} \otimes x^{b-u} \otimes x - \sum_{v=1}^{a-1} y^{v-1} \otimes y^{a-v} \otimes y.
\]
By using the shuffle product we can then get explicit representatives for all the homology generators in $B^{cy}(A; \wt{A})$.

Now, to compute the action of the preferred generator $\tau$ of $C_m$ on one of the generators of $\wt{H}_q(\Sigma(a,b,m))$ it suffices to find a representative for the generator in $B^{cy}(A;\wt{A})$, compute the action by $\tau$, and express the result as a linear combination of the generators.

We give some of the details in Case (4), the other cases are similar but easier. As an abelian group $\bZ[C_m/C_{m/ab}]$ has rank $(a-1)(b-1)$, and we can choose generators $e_1,\ldots,e_{(a-1)(b-1)}$ in such a way that $\tau \cdot e_i = e_{i+1}$ for $i<(a-1)(b-1)$ and $\tau \cdot e_{(a-1)(b-1)}$ is given by
\[
 \tau \cdot e_{(a-1)(b-1)} = \sum \big( -e_{w+1} + e_{w+2} \big),
\]
where the sum is over all $w \in [0, (a-1)(b-1)-2]$ such that $w$ is a sum of $a$'s and $b$'s.

In the case at hand, let $e_i$ be the generator corresponding to $q=(a-1)(b-1)+1-i$. Then it is clear that for $i \leq (a-1)(b-1)-1$ we have $\tau \cdot e_i = e_{i+1}$. To calculate $\tau \cdot e_{(a-1)(b-1)}$, first consider the case $r=0$. Then
\begin{multline*}
 e_{(a-1)(b-1)} = \tilde{y} t^{(a-1)(b-1)-1} dx - \tilde{x} t^{(a-1)(b-1)-1} dy = \\ 
 \sum_{v=1}^b t^{v-1} t_0 t^{(a-1)(b-1)-1+b-v} \otimes x 
 - \sum_{u=1}^a t^{u-1} t_0 t^{(a-1)(b-1)-1+a-u} \otimes y.
\end{multline*}
If we apply $\tau$ we get
\begin{multline*}
 \tau \cdot e_{(a-1)(b-1)} = \sum_{v=2}^b t^{v-2} t_0 t^{(a-1)(b-1)+b-v} \otimes x - \sum_{u=2}^a t^{u-2} t_0 t^{(a-1)(b-1)+a-u} \otimes y \\
 - t^{a-1} t_0 \otimes x^{b-1} + t^{b-1} t_0 \otimes y^{a-1}.
\end{multline*}
This is not a linear combination of our generators, but it is homologous to one. If we add
\[
 \sum_{u=1}^{b-1} -d(x^{u-1} t^{a-1} t_0 \otimes x^{b-u} \otimes x) + \sum_{v=1}^{a-1} d(y^{v-1} t^{b-1} t_0 \otimes y^{a-v} \otimes y),
\]
we get precisely $\tau \cdot e_{(a-1)(b-1)}$, so that finishes the proof for $r=0$. (We write $d$ for the differential in the cyclic bar resolution rather than $\delta$.)

When $r > 0$, the same procedure works except we have to adjust $\tau \cdot e_{(a-1)(b-1)}$ by $d$ applied to
\[
 \Big( \sum_{u=1}^{b-1} -x^{u-1} t^{a-1} t_0 \otimes x^{b-u} \otimes x + \sum_{v=1}^{a-1} y^{v-1} t^{b-1} t_0 \otimes y^{a-v} \otimes y \Big) \cdot z^{[r]}
\]
instead. (This involves using the shuffle product.) We omit the details as they are not particularly enlightening.
\end{proof}

This shows that the homology groups of $X(a,b,m)$ agrees with the homology groups of $Y_{\lambda(a,b,m)}$, giving strong circumstantial evidence that $X(a,b,m)$ and $Y(a,b,m)$ are $C_m$-equivariantly homotopy equivalent.

We can also give an alternative proof of the main theorem in \cite{HeMa97b}. The homological algebra input is as follows. Following \cite[Definition 3.2]{An14}, let $X_{s,a}$ denote the pointed simplicial set generated by an $(s-1)$-simplex $x_0 | x | \ldots | x$ ($s$ factors) and a basepoint $*$, with face and degeneracy maps as in the Hochschild chain complex and with relations $x^a=*$ and $x^{i-1} x_0 x^{a-i} = *$. The $C_s$-action is similar to the $C_m$-action defined on $\Sigma(a,b,m)$ above. Let $B = \bZ[x]/(x^a)$ and define a $B$-bimodule $\wt{B}$ by $\wt{B}_k = \bigoplus\limits_{i=1}^{k-1} \bZ\{x^{i-1} x_0 x^{k-i}\}$ for $1 \leq k \leq a-1$. Then there is an isomorphism between the simplicial chain complex of $X_{s,a}$ and the degree $s$ part of the chain complex $B^{cy}(B; \wt{B})$ computing $HH_*(B; \wt{B})$.

By an elaboration of \cite[Proposition 3.4]{An14} we can then prove the following:

\begin{thm} \label{t:homologyoftruncated}
The reduced homology of $X_{s,a}$ as a $\bZ[C_s]$-module is given as follows:
\begin{enumerate}
 \item If $a \nmid s$ then $\wt{H}_q(X_{s,a})$ is isomorphic to $\bZ[C_s/C_s]$ in dimension $2d$ and zero otherwise. 
 \item If $a \mid s$ then $\wt{H}_q(X_{s,a})$ is isomorphic to the kernel of $\bZ[C_s/C_{s/a}] \to \bZ[C_s/C_s]$ in dimension $2d+1$ and zero otherwise.
\end{enumerate}
Here $d = \lfloor (s-1)/a \rfloor$.
\end{thm}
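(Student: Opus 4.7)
The plan is to adapt the proof of Theorem \ref{t:homologyofsigma} to the single-variable setting, where the algebra gets considerably simpler. Start with the classical dg $B^e$-algebra resolution
\[
 R(B) = B^e \otimes \Lambda(dx) \otimes \Gamma(z)
\]
of $B = \bZ[x]/(x^a)$, with $\delta(dx) = x \otimes 1 - 1 \otimes x$ and $\delta(z) = \sum_{u=1}^{a} x^{u-1} \otimes x^{a-u} \cdot dx$, and tensor with $\wt{B}$ over $B^e$ to obtain
\[
 \wt{R}(B) = \wt{B} \otimes \Lambda(dx) \otimes \Gamma(z).
\]
This computes $HH_*(B; \wt{B})$, and restricting to the $x$-degree $s$ summand gives a chain complex computing $\wt{H}_*(X_{s,a})$. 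Because $\wt{B}_k = 0$ for $k \geq a$, most of the terms that would normally appear in $\delta$ vanish on sight. The abelian-group structure of the homology is essentially \cite[Proposition 3.4]{An14}, so the new content lies in producing explicit cycles and tracking the $C_s$-action on them.

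In the case $a \nmid s$, write $s = i + ar$ with $0 < i < a$, so $d = r$. The proposed generator in dimension $2r$ is
\[
 g = x^{i-1} x_0 \cdot z^{[r]}.
\]
This is a cycle because $\delta(x^{i-1} x_0 \cdot z) = \sum_{u=1}^a x^{u-1} \cdot x^{i-1} x_0 \cdot x^{a-u} \cdot dx$ has every term in $\wt{B}_a = 0$. In the case $a \mid s$, write $s = ar$ so $d = r-1$, and consider the $a-1$ cycles
\[
 g_i = x^{i-1} x_0 x^{a-1-i} \cdot dx \cdot z^{[r-1]}, \qquad i = 1, \ldots, a-1,
\]
in dimension $2r-1$; these are cycles because $\delta(x^{i-1} x_0 x^{a-1-i} \cdot dx) = x^i x_0 x^{a-1-i} - x^{i-1} x_0 x^{a-i}$ vanishes in $\wt{B}$ for degree reasons, and the contribution from $\delta(z^{[r-1]})$ involves a factor $dx \cdot dx = 0$. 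To show that nontrivial multiples are never boundaries and that these classes exhaust the homology, I would use the same maximal-degree argument as in Cases (1) and (2) of Theorem \ref{t:homologyofsigma}.

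To pin down the $C_s$-action, pull the generators back to the cyclic bar complex $B^{cy}(B; \wt{B})$, representing $dx$ by $1 \otimes x$ and $z$ by $\sum_{u=1}^{a-1} x^{u-1} \otimes x^{a-u} \otimes x$, and combine via the shuffle product. In the case $a \nmid s$ the single generator $g$ is fixed by $\tau$ up to an explicit boundary, recovering the trivial module $\bZ[C_s/C_s]$. In the case $a \mid s$ one checks that $\tau \cdot g_i$ is homologous to $g_{i+1}$ for $i < a-1$, while $\tau \cdot g_{a-1}$ is homologous to $-(g_1 + \cdots + g_{a-1})$ modulo an explicit boundary; these relations match the action on the basis $f_i = [\tau^i] - [\tau^{i-1}]$ of $\ker(\bZ[C_s/C_{s/a}] \to \bZ)$. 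The main obstacle is this last step: as in the proof of Theorem \ref{t:homologyofsigma}, one has to write down an explicit boundary in $B^{cy}(B; \wt{B})$ that realizes the needed relation, and the bookkeeping is tedious. It is, however, strictly simpler than the corresponding step in Theorem \ref{t:homologyofsigma} because the double sums over $a$ and $b$ collapse to single sums over $a$.
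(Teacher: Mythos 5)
Your approach is essentially the paper's: the paper omits this proof, saying only that it uses the explicit representatives of \cite[Proposition 3.4]{An14} and follows the pattern of Theorem \ref{t:homologyofsigma}, which is exactly the adaptation you carry out, and your generators are correct (in the top homological degree they span the entire cycle group, so the group-level statement is essentially immediate, and the claimed relations do present $\ker(\bZ[C_s/C_{s/a}]\to\bZ[C_s/C_s])$). The only quibble is that with the stated conventions $\tau$ shifts the index of $g_i$ downward rather than upward (compare the deliberate relabeling of the $e_i$ in Case (4) of the proof of Theorem \ref{t:homologyofsigma}), but after reindexing this yields the same $\bZ[C_s]$-module, so the conclusion is unaffected.
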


The proof uses the explicit representatives for the homology groups given in \cite[Proposition 3.4]{An14}. Since the details are similar to the proof of Theorem \ref{t:homologyofsigma} and this result will only be used to reprove a known result, we omit the details of the proof.

\section{The Picard group of $(Sp^{C_{p^n}})^\wedge_p$} \label{s:picard}
Recall that the Picard group of a symmetric monoidal category is the group of isomorphism classes of invertible objects. We are interested in the Picard group of the homotopy category of genuine $G$-spectra for a finite group $G$. Later we will specialize to $G=C_{p^n}$ but for now we can let $G$ be any finite group. Then we have a map $RO(G) \to \pic(Sp^G)$ given by $\alpha \mapsto S^\alpha$. As explained in the introduction, if $\alpha = [V] - [W]$ then $S^\alpha = \Sigma^{-W} \Sigma^\infty S^V$. A priori this is only well defined modulo non-canonical isomorphism, but for the purpose of computing Picard groups that does not matter.

The Picard group is the appropriate indexing set for homotopy groups. If $\cC$ is some symmetric monoidal category and $[X] \in \pic(\cC)$ then we are interested in the functor $\pi_X(-) = [X, -]$, and if $[X] = [X']$ in $\pic(\cC)$ then obviously $\pi_X(-)$ and $\pi_{X'}(-)$ are isomorphic functors. In particular, if $\alpha, \alpha' \in RO(G)$ are such that $S^\alpha \simeq S^{\alpha'}$ then the functors $\pi_\alpha^G(-)$ and $\pi_{\alpha'}^G(-)$ are going to be isomorphic. It sometimes happens that $\alpha \neq \alpha'$ in $RO(G)$ but $S^\alpha \simeq S^{\alpha'}$, which means that the full representation ring is not the natural indexing set for homotopy groups.

There are incarnations of this in the literature. For example, in \cite[Theorem 2.16]{Ze17}, which Zeng attributes to Hu and Kriz, Zeng shows the following: Let $G=C_{p^n}$, and suppose $\gamma$ and $\gamma'$ are primite $p^i$'th roots of unity. Then $S^\gamma \sma H\underline{\bZ} \simeq S^{\gamma'} \sma H\underline{\bZ}$. That allows for a similar conclusion as in Theorem \ref{t:Picard}, but after smashing with $H\underline{\bZ}$ instead of completing at $p$, and lets Zeng index $H\underline{\bZ}$-(co)homology on a smaller indexing set than $RO(G)$.

For another example, consider the $RO(C_{p^n})$-graded equivariant homotopy groups of $THH(\bF_p)^{C_{p^n}}$ or, with mod $p$ coefficients, of $THH(\bZ)^{C_{p^n}}$. These have been determined by Gerhardt \cite{Ge08} and the author and Gerhardt \cite{AnGe11}. In both cases, the calculations could not distinguish between primitive $p^i$'th roots of unity, but no conceptual reason for this was given.

Let $C(G)$ denote the ring of functions from the set of conjugacy classes of subgroups of $G$ to $\bZ$, considered as an abelian group. Then we have a map $\Psi : \pic(Sp^G) \to C(G)$ given by $\Psi(X) = f$ where $f$ is defined by $\Phi^H(X) \simeq S^{f(H)}$. For example, if $\alpha = [V] - [W]$ in $RO(G)$ then $\Psi(S^\alpha)(H) = \dim(V^H) - \dim(W^H)$. By the main result of \cite{FLM01}, there is an exact sequence
\[
 0 \to \pic(A(G)) \to \pic(Sp^G) \xto{\Psi} C(G)
\]
Here $A(G)$ is the Burnside ring of $G$, defined as the group completion of the monoid of isomorphism classes of finite $G$-sets under disjoint union. The ring structure comes from Cartesian product. As a group, $A(G)$ is free abelian with a basis consisting of $G/H$ where $H$ runs through the set of conjugacy classes of subgroups of $G$.

The map $\Psi : \pic(Sp^G) \to C(G)$ is not in general surjective: For example, if $G=C_p$ for an odd prime $p$ then a function $f \in C(G)$ is in the image if and only if $f(e) \equiv f(G) \mod 2$. And the composite $\Psi' : RO(G) \to \pic(Sp^G) \to C(G)$ is not usually injective. For $G=C_{p^n}$, $\Psi'$ is injective only for $p^n=2, 3, 4$. For example, when $G=C_5$ the spectrum $S^{\bC(1)-\bC(2)}$ is in the kernel of $\Psi'$. But $S^{\bC(1)-\bC(2)} \sma S^{\bC(1)-\bC(2)} \simeq S^0$, so the corresponding element of $\pic(A(C_5))$ has order $2$ and in fact $\pic(A(C_5)) \cong \bZ/2$.

The group $\pic(A(G))$ is finite, so when $\Psi'$ has a non-trivial kernel the map $RO(G) \to \pic(Sp^G)$ has a non-trivial kernel as well. In particular, when $\Psi'$ has a non-trivial kernel there are $\alpha \neq \alpha'$ in $RO(G)$ with $S^\alpha \simeq S^{\alpha'}$. For example, when $G=C_5$ that is true for $\alpha=2\bC(1)$ and $\alpha'=2\bC(2)$.

We are interested in the category of $p$-complete $G=C_{p^n}$-spectra for a prime $p$. We will use the notation $X_{hG}$, $X^{hG}$ and $X^{tG}$ for the homotopy orbit spectrum, the homotopy fixed point spectrum, and the Tate spectrum of $G$ acting on $X$, respectively.

\begin{thm} \label{t:Picard}
Let $G=C_{p^n}$. Then map $\Psi : \pic((Sp^G)^\wedge_p) \to C(G)$ given by $\Psi(X) = f$ where $f$ is defined by $\Phi^H(X) \simeq (S^{f(H)})^\wedge_p$ is injective.
\end{thm}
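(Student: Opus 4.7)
The plan is to reduce the statement to a purely algebraic Picard group calculation by establishing a $p$-complete analogue of the Fausk--Lewis--May exact sequence
\[
 0 \to \pic(A(G)^\wedge_p) \to \pic((Sp^G)^\wedge_p) \xto{\Psi} C(G),
\]
and then computing $\pic(A(C_{p^n})^\wedge_p) = 0$. If both steps go through, injectivity of $\Psi$ is immediate.

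The algebraic vanishing is the key input, and I would prove it by showing that $A(C_{p^n})^\wedge_p$ is a Noetherian local ring, since every invertible module over a local ring is free of rank one. The Burnside ring $A(C_{p^n})$ is free of rank $n+1$ as a $\bZ$-module, so $A(C_{p^n})^\wedge_p$ is a finitely generated $\bZ_p$-algebra, and is local iff $A(C_{p^n})$ has a unique prime ideal above $p$. For each subgroup $H \leq G = C_{p^n}$, the mark $[X] \mapsto |X^H|$ reduces modulo $p$ to a ring map $\bar\phi_H \col A(G) \to \bF_p$ whose kernel is a prime above $p$. Decomposing any finite $G$-set as $X = \bigsqcup_i G/K_i$, the transitive pieces with $K_i = G$ contribute $1$ to both $|X^H|$ and $|X^G|$, while the pieces with $K_i \subsetneq G$ have $|G/K_i|$ equal to a positive power of $p$ (since $G$ is a $p$-group) and therefore contribute only in multiples of $p$ to either mark. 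Hence $|X^H| \equiv |X^G| \pmod{p}$ for every $H$, the kernels $\ker \bar\phi_H$ all coincide, and there is a unique prime above $p$.

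The FLM-style exact sequence itself I would prove following the approach of \cite{FLM01}: given an invertible $X \in (Sp^G)^\wedge_p$ with $\Psi(X) = 0$, each geometric fixed-point spectrum is a $p$-complete $0$-sphere, and $X$ is then determined by the invertible $A(G)^\wedge_p$-module $\pi_0^G(X)$ via a suitable equivariant Eilenberg--MacLane construction on the resulting invertible Burnside Mackey functor. The main obstacle is carrying this over carefully to the $p$-complete setting, since $p$-completion does not formally commute with all equivariant constructions; however, invertible objects in $(Sp^G)^\wedge_p$ are still dualizable and one has $\pi_0^G(\bS^\wedge_p) = A(G)^\wedge_p$, so the FLM argument adapts essentially verbatim. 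An alternative route that avoids invoking \cite{FLM01} as a black box is to induct on $n$ using the isotropy separation square for $C_p \subseteq C_{p^n}$: the inductive hypothesis gives $\Phi^{C_p}(X) \simeq \bS^\wedge_p$ as a $C_{p^{n-1}}$-spectrum, the underlying spectrum is $(\bS^0)^\wedge_p$, and the gluing obstruction between these lives in an automorphism group controlled by precisely the same algebraic Picard computation above, which trivializes it.
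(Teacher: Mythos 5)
Your algebraic step is correct and complete: for $G=C_{p^n}$ every mark agrees with the mark at $G$ modulo $p$, so $A(G)^\wedge_p\cong A(G)\otimes\bZ_p$ is a local ring and $\pic(A(G)^\wedge_p)=0$. But the whole proof then rests on the $p$-complete analogue of the Fausk--Lewis--May sequence, and that is asserted (``adapts essentially verbatim'') rather than proved; it is exactly the hard part. Concretely, what you need is (i) that a kernel element $X$ and its inverse are connective with $\underline{\pi}_0X$ an invertible module over the $p$-completed Burnside Mackey functor (this requires knowing the homotopy Mackey functors are finitely generated over $\bZ_p$ so that $\underline{\pi}_0$ of the completed smash is the completed box product); (ii) the completed analogue of the \emph{algebraic} theorem of \cite{FLM01} identifying invertible Burnside Mackey modules with invertible $A(G)$-modules --- the box product does not simply evaluate at $G/G$ to a tensor product over $A(G)$, so this is a genuine statement to re-prove after completion, not a formality; and (iii) a realization step saying a map $(S^0)^\wedge_p\to X$ inducing an isomorphism on $\underline{\pi}_0$ is an equivalence (this part is fine, via naturality of $\pi_0^H\to\pi_0\Phi^H$ and surjectivity of the completed mark). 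Items (i)--(ii) are plausible, and if carried out your route would be genuinely different from the paper's --- the paper never invokes \cite{FLM01} for the kernel; it runs an induction over subgroups through the fundamental (isotropy separation) diagram, uses the Segal conjecture to identify $\wh{\Gamma}$ as an equivalence, and then does the delicate step of showing the relevant class is detected by a unit in filtration zero of the Tate spectral sequence by comparing with $X\sma THH(\bF_p)$. Your approach, if completed, would even give the stronger statement $\ker\Psi\cong\pic(A(G)^\wedge_p)$ for general finite $G$; but as written there is a gap where the $p$-complete FLM sequence is claimed.

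Your fallback inductive sketch is not correct as stated. After the inductive step, the remaining datum gluing the underlying Borel spectrum to $\Phi^{C_p}X$ is a $C_{p^{n-1}}$-equivariant map to a Tate construction, i.e.\ an element of $\pi_0^{C_{p^{n-1}}}\big(((S^0)^\wedge_p)^{tC_p}\big)$, which by the Segal conjecture is a completed Burnside ring --- it is \emph{not} ``controlled by the same algebraic Picard computation.'' The issue is to show this class is a unit, equivalently that it is detected in Tate filtration zero rather than higher filtration (the relevant $\pi_0$ has $\bZ_p$-rank bigger than one in general), and triviality of $\pic(A(C_{p^n})^\wedge_p)$ says nothing about that. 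This filtration question is precisely the crux of the paper's proof of Theorem \ref{t:Picard}, resolved there by naturality along $S\to THH(\bF_p)$, where the Tate spectral sequence is completely understood. So if you pursue this statement, either carry out the $p$-complete FLM reduction honestly (your local-ring computation then finishes it), or be prepared to confront the Tate-filtration argument that the paper handles with the $THH(\bF_p)$ comparison.
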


\begin{proof}
Suppose $X$ is in the kernel of $\Psi : \pic((Sp^G)^\wedge_p) \to C(G)$, so $\Phi^H(X) \simeq (S^0)^\wedge_p$ for each $H \leq G$. Then it suffices to show that $X \simeq (S^0_G)^\wedge_p$. To unclutter the notation, let us implicitly $p$-complete all spectra in the rest of the proof.

Let $X_i = \Phi^{C_{p^{n-i}}}(X)$. Then we will prove by induction on $i$ that $X_i \simeq S^0_{C_{p^i}}$. Here $C_{p^i} = C_{p^n}/C_{p^{n-i}}$, which acts on $X_i$. The base case $i=0$ is immediate. Now consider the following diagram, which we will refer to as the fundamental diagram:

\[ \xymatrix{
 (X_i)_{hC_{p^i}} \ar[r] \ar[d]^= & X_i^{C_{p^i}} \ar[r] \ar[d]^\Gamma & X_{i-1}^{C_{p^{i-1}}} \ar[d]^{\wh{\Gamma}} \\
 (X_i)_{hC_{p^i}} \ar[r] & X_i^{hC_{p^i}} \ar[r] & X_i^{tC_{p^i}}
} \]
Here the rows are cofiber sequence.

By induction we can assume that $X_{i-1} \simeq S^0_{C_{p^{i-1}}}$, detected by a $C_{p^{i-1}}$-equivariant map $f_{i-1} : S^0_{C_{p^{i-1}}} \to X_{i-1}$. From \cite[Lemma 9.1]{HeMa97} it follows that the Tate spectrum $X_i^{tC_{p^i}}$ only depends on $X_{i-1}$, so by the Segal conjecture $\wh{\Gamma}$ is an equivalence. (To see that loc.\ cit.\ applies, use that any finite $G$-CW spectrum $X$ is given by $S^{-N\rho} \sma W$ for a finite $G$-CW space $W$ for a sufficiently large $N$, where $\rho$ is the regular representation.)

Now we claim that $\wh{\Gamma}(f_{i-1})$ is represented by a unit times the generator of $\wh{E}_2^{0,0}(X_i) \cong \bZ/p^i$ in the Tate spectral sequence converging to $\pi_* X_i^{tC_{p^i}}$. This requires some justification, as $\pi_0 X_i^{tC_{p^i}} \cong \bZ_p^i$ and $\wh{\Gamma}(f_{i-1})$ could a priori be represented in higher filtration in the Tate spectral sequence.

To see that $\wh{\Gamma}(f_{i-1})$ does indeed map to a unit times the generator of $\wh{E}_2^{0,0}(X_i)$ we compare the fundamental diagram for $X$ to the fundamental diagram for $X' = X \sma THH(\bF_p)$. So let $X_i' = \Phi^{C_{p^{n-i}}}(X \sma THH(\bF_p))$. We know that $f_{i-1}$ maps to a generator $f_{i-1}'$ of $\pi_0 (X'_{i-1})^{C_{p^{i-1}}} \cong \bZ/p^i$, and that $\wh{\Gamma} : (X'_{i-1})^{C_{p^{i-1}}} \to (X'_i)^{tC_{p^i}}$ is an equivalence on connective covers and in particular an isomorphism on $\pi_0$. Moreover, we can describe the Tate spectral sequence explicitly. The $E_2$-term of the Tate spectral sequence convering to $\pi_* (X_i')^{tC_{p^i}}$ is isomorphic to the $E_2$-term of the Tate spectral sequence converging to $\pi_* THH(\bF_p)^{tC_{p^i}}$, and since our Tate spectral sequence is a module over the one converging to $\pi_* THH(\bF_p)^{tC_{p^i}}$, this forces the differentials in the two spectral sequences to be isomorphic. It follows that any unit in $\pi_0 (X_{i-1}')^{C_{p^{i-1}}}$ maps to an element in $\pi_0 (X_i')^{tC_{p^i}}$ represented by a unit in $\wh{E}_2^{0,0}(X_i') \cong \bZ/p$. Now a diagram chase and naturality with respect to the map $X \to X'$ induced by $S \simeq THH(S) \to THH(\bF_p)$ shows that $\wh{\Gamma}(f_{i-1})$ is represented by a unit in $\wh{E}_2^{0,0}(X_i) \cong \bZ/p^i$.

By the top cofiber sequence in the fundamental diagram it follows that $\pi_0 X_i^{C_{p^i}} \to \pi_0 X_{i-1}^{C_{p^{i-1}}}$ is split surjective, so we can choose a lift $f_i$ of $f_{i-1}$. The commutativity of the right hand side square then shows that $\Gamma(f_i) \in \pi_0 X_i^{hC_{p^i}}$ is represented by a unit in $E_2^{0,0}(X_i) = \bZ_p$, where $E_2^{*,*}(X_i)$ is the $E_2$-term of the homotopy fixed point spectral sequence converging to $\pi_* X_i^{hC_{p^i}}$. It follows that $f_i$, thought of as a $C_{p^i}$-equivariant map $S^0_{C_{p^i}} \to X_i$, is an equivalence on underlying spectra. By construction $f_i$ is also an equivalence on geometric fixed points, so $f_i$ an equivalence and $X_i \simeq S^0_{C_{p^i}}$. This completes the induction step.
\end{proof}

This also lets us recognize representation spheres:

\begin{cor} \label{c:recrepsphere}
Let $G=C_{p^n}$. Suppose $X$ is a bounded below $G$-spectrum of finite type, and suppose there is some $\beta \in RO(G)$ such that the homology groups of $\Phi^{C_{p^{n-i}}}(X)$ agree with the homology groups of $\Phi^{C_{p^{n-i}}}(S^\beta)$ as $\bZ[C_{p^i}]$-modules for each $0 \leq i \leq n$. Then $X^\wedge_p \simeq (S^\beta)^\wedge_p$.
\end{cor}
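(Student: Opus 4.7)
The plan is to reduce to the case $\beta = 0$ and then rerun the inductive proof of Theorem \ref{t:Picard} verbatim. First I would form $Y = X \sma S^{-\beta}$, so that $\Phi^H(Y) \simeq \Phi^H(X) \sma S^{-\dim \beta^H}$, and the hypothesis translates into $H_*(\Phi^{C_{p^{n-i}}}(Y); \bZ) \cong H_*(S^0; \bZ)$ as $\bZ[C_{p^i}]$-modules for each $0 \leq i \leq n$. It is therefore enough to prove $Y^\wedge_p \simeq (S^0_G)^\wedge_p$, which by smashing with $S^\beta$ yields $X^\wedge_p \simeq (S^\beta)^\wedge_p$.

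Since $X$ is bounded below of finite type, so is every $\Phi^H(Y)$. A bounded below spectrum with the integral homology of $S^0$ is weakly equivalent to $S^0$: by the stable Hurewicz theorem one picks a class $S^0 \to \Phi^H(Y)$ representing a generator of $\pi_0 \cong H_0 \cong \bZ$, and this is a homology isomorphism between bounded below spectra, hence (by the stable Whitehead theorem) a weak equivalence. Thus $\Phi^H(Y)^\wedge_p \simeq (S^0)^\wedge_p$ for each $H \leq G$.

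I would then carry out the inductive argument used in the proof of Theorem \ref{t:Picard} with $Y$ in place of $X$. Writing $Y_i = \Phi^{C_{p^{n-i}}}(Y)$, the goal is to build $C_{p^i}$-equivariant equivalences $f_i : S^0_{C_{p^i}} \to Y_i$ after $p$-completion. The base case $i = 0$ is the non-equivariant equivalence just established. For the inductive step I would lift $f_{i-1}$ through the fundamental diagram, using that $Y_i^{tC_{p^i}}$ depends only on $Y_{i-1}$ (Hesselholt--Madsen's \cite[Lemma 9.1]{HeMa97}) so that the Segal conjecture provides an equivalence $\wh{\Gamma}$, and using the comparison with $THH(\bF_p)$ to ensure the image of $f_{i-1}$ in the Tate spectral sequence is a unit in filtration $0$. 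The lifted map is then an underlying equivalence by the homotopy fixed point spectral sequence and a geometric-fixed-point equivalence by naturality against the base case. Taking $i = n$ produces the desired equivalence $Y^\wedge_p \simeq (S^0_G)^\wedge_p$.

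The main obstacle is to verify that the proof of Theorem \ref{t:Picard} really depends only on the equivalences $\Phi^H(X) \simeq (S^0)^\wedge_p$ together with the bounded-below-finite-type hypothesis on $X$, rather than on the a priori stronger assumption that $X$ is invertible. The invertibility hypothesis there is used only to extract those geometric-fixed-point equivalences from the assumption $X \in \ker \Psi$; in the present corollary the homological hypothesis plays the same role, and the inductive machinery then transfers word for word.
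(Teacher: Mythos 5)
Your argument is correct and is essentially the paper's own proof: the paper sets $Z = F(S^\beta, X)^\wedge_p$ (the same, up to equivalence, as your $X \sma S^{-\beta}$), uses the non-equivariant Hurewicz/Whitehead argument to see that each $\Phi^{C_{p^{n-i}}}(Z)$ is $(S^0)^\wedge_p$, and then observes that the inductive proof of Theorem \ref{t:Picard} goes through verbatim, since that induction only uses the geometric-fixed-point equivalences together with boundedness below and finite type, not invertibility. Your closing remark making that last dependence explicit is exactly the point the paper leaves implicit.
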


\begin{proof}
Let $Z = F(S^\beta, X)^\wedge_p$. For each $0 \leq i \leq n$ we use the non-equivariant Hurewicz theorem to conclude that $\Phi^{C_{p^{n-i}}}(Z) \simeq (S^0)^\wedge_p$. Then the proof of Theorem \ref{t:Picard} goes through and shows that $Z \simeq (S^0_{C_{p^n}})^\wedge_p$, which gives the result.
\end{proof}

\begin{remark}
We are not making any statement about whether or not the above map $RO(C_{p^n}) \to \pic((Sp^{C_{p^n}})^\wedge_p)$ is surjective. It is possible (though we think it is unlikely) that there are exotic $p$-complete $C_{p^n}$-spheres that do not come from representation spheres.
\end{remark}

\section{Recognizing $G$-spectra} \label{s:rec}
In this section we leverage the calculation in Theorem \ref{t:Picard} above to recognize more general $G$-spectra.

\begin{defn} \label{d:Ybeta}
Let $\beta \in RO(C_m)$. Given relatively prime natural numbers $a$ and $b$, define a $C_m$-spectrum $Y_\beta$ as follows:
\begin{enumerate}
\item If $a \nmid m$ and $b \nmid m$, let $Y_\beta = (C_m/C_m)_+ \sma S^\beta$.
\item If $a \mid m$ and $b \nmid m$, let $Y_\beta$ be the homotopy cofiber of the map
\[
 (C_m/C_{m/a})_+ \sma S^\beta \to (C_m/C_m)_+ \sma S^\beta.
\]
\item If $a \nmid m$ and $b \mid m$, let $Y_\beta$ be the homotopy cofiber of the map
\[
 (C_m/C_{m/b})_+ \sma S^\beta \to (C_m/C_m)_+ \sma S^\beta.
\]
\item If $a \mid m$ and $b \mid m$, let $Y_\beta$ be the iterated homotopy cofiber of the diagram
\[ \xymatrix{
 (C_m/C_{m/ab})_+ \sma S^\beta \ar[r] \ar[d] & (C_m/C_{m/a})_+ \sma S^\beta \ar[d] \\
 (C_m/C_{m/b})_+ \sma S^\beta \ar[r] & (C_m/C_m)_+ \sma S^\beta 
} \]

\end{enumerate}

\end{defn}

\begin{thm} \label{t:recognizing}
Let $a$ and $b$ be relatively prime natural numbers. Suppose $X$ is a bounded below $C_m$-spectrum of finite type, let $n=\nu_p(m)$, and consider $X$ as a $C_{p^n}$-spectrum via the inclusion $C_{p^n} \to C_m$. If $\beta \in RO(C_m)$ is such that the homology groups of $\Phi^{C_{p^{n-i}}}(X)$ agree with the homology groups of $\Phi^{C_{p^{n-i}}}(Y_\beta)$ as $\bZ[C_{p^i}]$-modules for each $0 \leq i \leq n$ then $X^\wedge_p \simeq (Y_\beta)^\wedge_p$ as $C_{p^n}$-spectra.
\end{thm}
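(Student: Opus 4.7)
The plan is to adapt the inductive argument in the proof of Theorem \ref{t:Picard}. The strategy is to construct a $C_{p^n}$-equivariant map $g\colon Y_\beta \to X$ (after $p$-completion) realizing the given homology isomorphism on each $\Phi^{C_{p^{n-i}}}$, and then to deduce that $g$ is a $p$-complete equivalence by studying its cofiber.

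Granting such a $g$, the second step is brief: the cofiber $C_g$ has $H_*(\Phi^{C_{p^{n-i}}}(C_g)) = 0$ for each $0 \leq i \leq n$ by the long exact sequence in homology applied to $Y_\beta \to X \to C_g$. Since $\Phi^{C_{p^{n-i}}}(C_g)$ is bounded below of finite type, the non-equivariant Hurewicz theorem gives $\Phi^{C_{p^{n-i}}}(C_g)^\wedge_p \simeq *$. Joint conservativity of the geometric fixed point functors on genuine $C_{p^n}$-spectra then forces $(C_g)^\wedge_p \simeq *$, so $g$ is a $p$-complete equivalence and $X^\wedge_p \simeq (Y_\beta)^\wedge_p$.

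To construct $g$, I would exploit the iterated cofiber structure of $Y_\beta$. In Case (4) of Definition \ref{d:Ybeta}, there is a cofiber sequence $P \to S^\beta \to Y_\beta$ of $C_m$-spectra, where $P$ is the homotopy pushout of the three non-terminal corners $(C_m/C_{m/d})_+ \sma S^\beta$ for $d \in \{a, b, ab\}$; Cases (1)--(3) are simpler variants. Specifying a $C_{p^n}$-equivariant map $Y_\beta \to X$ then amounts to giving an equivariant map $\psi\colon S^\beta \to X$ together with a null-homotopy of the composite $P \to S^\beta \xrightarrow{\psi} X$. I would build $\psi$ by the inductive scheme in the proof of Theorem \ref{t:Picard}: start from a non-equivariant $p$-complete equivalence supplied by the $i=n$ case of the hypothesis and the non-equivariant Hurewicz theorem, then lift successively through the fundamental diagram over each subgroup $C_{p^{n-i}}$, using the Segal conjecture (so that $\wh{\Gamma}$ is an equivalence) and comparison with $X \sma THH(\bF_p)$ to identify unit classes in the relevant Tate $E_2$-terms.

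The main obstacle is the compatibility constraint, namely arranging that the lift $\psi$ has null-homotopic restriction to each non-terminal corner $(C_m/C_{m/d})_+ \sma S^\beta$ of $P$. This is a new feature beyond Theorem \ref{t:Picard}, in which the invertibility of $X$ makes $P$ trivial. I expect to handle it by refining the inductive lift to track, at each stage, not only $\psi$ on the $C_{p^{n-i}}$-fixed points but also its images in $[(C_m/C_{m/d})_+ \sma S^\beta, X]^{C_{p^n}}$; the homology hypothesis should force these obstruction classes to vanish, via the same $THH(\bF_p)$-module comparison used to control the Tate differentials in Theorem \ref{t:Picard}. This bookkeeping is most delicate in Case (4) where all three non-terminal corners contribute simultaneously.
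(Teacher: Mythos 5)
There is a genuine gap, and it sits exactly where you put the least weight. In the hard case (wlog $p \nmid a$, so the only nontrivial situation is $a \nmid m$, $b \mid m$, $p \mid b$), a map $Y_\beta \to X$ obtained from $\psi \colon S^\beta \to X$ plus a null-homotopy of $(C_m/C_{m/b})_+ \sma S^\beta \to S^\beta \xto{\psi} X$ is not controlled by $\psi$ in the range that matters: different null-homotopies change the induced map on the cofiber by arbitrary elements of $[\Sigma (C_m/C_{m/b})_+ \sma S^\beta, X]^{C_{p^n}}$, and it is precisely this correction term that carries the map's behaviour on the underlying spectrum and on $\Phi^{C_{p^{n-i}}}$ for $i \geq \nu_p(b)$, where $\Phi^{C_{p^{n-i}}}(Y_\beta)$ is a wedge of $b-1$ spheres (non-equivariantly) rather than a sphere, while $\Phi^{C_{p^{n-i}}}(\psi)$ contributes essentially nothing there for degree reasons. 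So ``obstruction classes vanish'' only gives \emph{some} extension; it does not produce an extension that is a homology isomorphism on those geometric fixed points, and your concluding Hurewicz-plus-conservativity step needs exactly that. Worse, the inductive scheme of Theorem \ref{t:Picard} cannot be run for $\psi$ past stage $i = \nu_p(b)$, because the inductive statement ``$\Phi^{C_{p^{n-i}}}(\psi)$ is an equivalence'' is false for dimension/rank reasons: $\Phi^{C_{p^{n-i}}}(S^\beta)$ and $\Phi^{C_{p^{n-i}}}(X)$ are not abstractly equivalent. (Your concern about all three corners of Case (4) interacting is also moot: since $a$ and $b$ are coprime, $p$ divides at most one of them, and the corners divisible only by the prime-to-$p$ number contribute wedge summands $\bigvee_{a-1}\Sigma(-)$ that play no role.)

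The paper's proof avoids the cofiber-extension problem entirely by running the induction on $Z = F(Y_\beta, X)$ itself; since $Y_\beta$ is a finite $C_m$-CW spectrum, $\Phi^{C_{p^{n-i}}}(Z) \simeq F(\Phi^{C_{p^{n-i}}}(Y_\beta), \Phi^{C_{p^{n-i}}}(X))$, so an element $f \in \pi_0 Z^{C_{p^n}}$ \emph{is} the desired map and no null-homotopy bookkeeping arises. The new work relative to Theorem \ref{t:Picard} is then to handle the fact that $Z_i$ is no longer a sphere: with $A = \ker(\bZ[C_m/C_{m/b}] \to \bZ[C_m/C_m])$ one has $\Hom(A,A) \cong \bZ[C_m/C_m] \oplus \bigoplus_{b-2} \bZ[C_m/C_{m/b}]$, and the Segal-conjecture/Tate argument (again via comparison with $Z \sma THH(\bF_p)$) is used to show the lifted class projects onto a unit in the summand corresponding to $\mathrm{id}_A$; the remaining summands correspond to maps $A \to A$ with determinant divisible by $p$, so the edge homomorphism still yields a $p$-complete equivalence on each $\Phi^{C_{p^{n-i}}}$. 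If you want to salvage your route, you would have to show that the null-homotopy can be \emph{chosen} so that the extension is an equivalence on all geometric fixed points, which amounts to redoing this unit-tracking in $[Y_\beta, X]^{C_{p^n}}$ anyway; at that point you are better off working with $F(Y_\beta, X)$ from the start.
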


\begin{proof}
We can assume without loss of generality that $p \nmid a$. The case $a \nmid m$, $b \nmid m$ is precisely Corollary \ref{c:recrepsphere} above. Since $p \nmid a$, going from the case $a \nmid m$ to the case $a \mid m$ only introduces an extra $\bigvee_{a-1} \Sigma(-)$ everywhere which plays no role in the argument. Similarly, if $p \nmid b$ then the result follows from Corollary \ref{c:recrepsphere}. So it suffices to prove the result in the case $a \nmid m$, $b \mid m$, under the additional assumption that $p \mid b$.

Let $Z=F(Y_\beta,X)$. Then we wish to find an element $f \in \pi_0 (Z^{C_{p^n}})^\wedge_p$ which is adjoint to a $C_{p^n}$-equivariant homotopy equivalence $(Y_\beta)^\wedge_p \to X^\wedge_p$. Since $Y_\beta$ is a finite $G$-CW complex,
\[
 \Phi^{C_{p^{n-i}}}(Z) \simeq F(\Phi^{C_{p^{n-i}}}(Y_\beta), \Phi^{C_{p^{n-i}}}(X))
\]
for each $0 \leq i \leq n$. For ease of notation, let $Z_i = \Phi^{C_{p^{n-i}}}(Z)$. We proceed by induction on $i$. For $i < \nu_p(b)$, the argument is identical to the argument in the proof of Theorem \ref{t:Picard}, since
\[
 \Phi^{C_{p^{n-i}}} \cof\big(\Sigma^\infty (C_m/C_{m/b})_+ \to \Sigma^\infty (C_m/C_m)_+\big) \simeq S^0_{C_{p^i}}.
\]

Now assume we have $f_{i-1} \in \pi_0 Z_{i-1}^{C_{p^{i-1}}}$ adjoint to an equivalence $\Phi^{C_{p^{n-i+1}}}(Y_\beta)^\wedge_p \to \Phi^{C_{p^{n-i+1}}}(X)^\wedge_p$. For $i \geq \nu_p(b)$, we consider once again the fundamental diagram. Let $A = \ker(\bZ[C_m/C_{m/b}] \to \bZ[C_m/C_m])$ considered as a $C_{p^i} = C_{p^n}/C_{p^{n-i}}$-module, and note that $Hom(A,A) \cong \bZ[C_m/C_m] \oplus \bigoplus\limits_{b-2} \bZ[C_m/C_{m/b}]$. From the non-equivariant Hurewicz theorem it follows that $Z_i$ is a wedge sum of $(b-1)^2$ copies of $S^0$, and that up to homotopy $Z_i$ looks like $\big((C_m/C_m)_+ \sma S^0\big) \vee \big( \bigvee\limits_{b-2} (C_m/C_{m/b})_+ \sma S^0 \big)$. As a $C_{p^i}$-spectrum this looks up to homotopy like $S^0 \vee \bigvee\limits_{b(b-2)/p^{\nu_p(b)}} (C_{p^i}/C_{p^{i-\nu_p(b)}})_+ \sma S^0$.

From the Segal conjecture combined with the fact that $Z_i^{tC_{p^i}}$ only depends on $Z_{i-1}$ we conclude that $\wh{\Gamma} : Z_{i-1}^{C_{p^{i-1}}} \to Z_i^{tC_{p^i}}$ is a $p$-completion.

We claim that $\wh{\Gamma}(f_{i-1})$ is represented by an element that projects onto a unit times the generator of the $\bZ/p^i$ in $\wh{E}_2^{0,0}(Z_i)$ coming from the $S^0$ wedge summand. For this we again consider $Z' = Z \sma THH(\bF_p)$ and $Z'_i = \Phi^{C_{p^{n-i}}}(Z')$. Then
\[
 \pi_0 (Z')^{tC_{p^i}} \cong \bZ/p^i \oplus \bigoplus_{b(b-2)/\nu_p(b)} \bZ/p^{i-\nu_p(b)},
\]
with the generator of the $\bZ/p^i$ represented on $\wh{E}_2^{0,0}$. Since $\wh{\Gamma}$ is an isomorphism on $\pi_0$ in this case, it follows that $\wh{\Gamma}$ applied to a generator of the $\bZ/p^i$ in $\pi_0 (Z'_{i-1})^{C_{p^{i-1}}}$ must project onto a unit times the corresponding generator in $\wh{E}_2^{0,0}(Z_i')$. Now a diagram chase and naturality with respect to $Z \to Z'$ shows that $\wh{\Gamma}(f_{i-1})$ is represented by an element that projects onto a unit times the generator of the $\bZ/p^i$ in $\wh{E}_2^{0,0}(Z_i)$.

The map $\pi_0 Z_i^{C_{p^i}} \to \pi_0 Z_{i-1}^{C_{p^{i-1}}}$ is split surjective, so we can choose a lift $f_i \in \pi_0 Z_i^{C_{p^i}}$ of $f_{i-1}$. It follows that $\Gamma(f_i)$ is represented in $E_2^{0,0}(Z_i)$, the $E_2$-term of the homotopy fixed point spectral sequence, by a lift of $\wh{\Gamma}(f_{i-1})$. And any lift of $\wh{\Gamma}(f_{i-1})$ to $E_2^{0,0}$ will surject onto a $p$-adic unit in the summand corresponding to $id : A \to A$. Any other summand in $E_2^{0,0}(Z_i)$ corresponds to a map $A \to A$ whose matrix has determinant divisible by $p$. Hence $\Gamma(f_i)$ maps via the edge homomorphism in the homotopy fixed point spectral sequence to (an element adjoint to) a map $\Phi^{C_{p^{n-i}}}(Y_\beta) \to \Phi^{C_{p^{n-i}}}(X)$ which is an equivalence after $p$-completion. This finishes the induction step.
\end{proof}

By combining Theorem \ref{t:homologyofsigma} and Theorem \ref{t:recognizing}, we conclude that the stable homotopy type of $X(a,b,m)$, as a $C_{p^n}$-spectrum with $n=\nu_p(m)$, is given by the suspension spectrum of the space $Y(a,b,m)$ defined on p.\ 3 of \cite{He14}. This proves Theorem \ref{t:main2}, and as explained in the introduction that suffices to prove Theorem \ref{t:main}.

Similarly, by combining Theorem \ref{t:homologyoftruncated} and Theorem \ref{t:recognizing}, we conclude that the stable homotopy type of $X_{s,a}$, as a $C_{p^n}$-spectrum with $n=\nu_p(s)$, is given by the suspension spectrum of the space defined just after \cite[Theorem B]{HeMa97b}. That gives an alternative proof of the calculation of $K(k[x]/(x^a), (x))$ in \cite[Theorem A]{HeMa99}.

\section{Proof of Theorem \ref{t:mainZ}} \label{s:integral}
In this section we prove Theorem \ref{t:mainZ}.

\begin{prop} \label{p:negcyc}
Rationally the negative cyclic homology of $A=\bZ[x,y]/(x^b-y^a)$ relative to $\mathfrak{a}$ is given by $\bQ^{(a-1)(b-1)}$ in each positive odd degree and zero otherwise.
\end{prop}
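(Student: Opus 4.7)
The plan is to compute $HN_*(A_\bQ, \mathfrak{a})$ rationally via the weight decomposition coming from the monoid-algebra structure $A_\bQ = \bQ[\Pi]$. Since $HN_n(\bQ) = 0$ in positive degrees, the augmentation $A_\bQ \to \bQ$ gives $HN_n(A_\bQ, \mathfrak{a}) \cong HN_n(A_\bQ)$ for $n > 0$, and the cyclic bar decomposition $HH(\bQ[\Pi]) \simeq \bigoplus_m \bQ \otimes B^{cy}\Pi[m]$ (respected by the Connes $S^1$-action) refines this to $HN_n(A_\bQ, \mathfrak{a}) \cong \bigoplus_{m > 0} HN^{(m)}_n$ for $n > 0$.

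For each $m > 0$ I would use the identification $B^{cy}\Pi[m] \simeq S^1_+ \wedge_{C_m} \Sigma(a,b,m)_+$ from Section \ref{s:THH}. The resulting $S^1$-action is free away from simplices with nontrivial $C_m$-stabilizer, and a norm-map/Borel-construction argument identifies, for $n > 0$,
\[
 HN^{(m)}_n(A_\bQ) \cong H_n(\Sigma(a,b,m); \bQ)^{C_m}.
\]
Theorem \ref{t:homologyofsigma} then determines the right-hand side. A rational irreducible $C_m$-representation has the form $\bQ[\zeta_d]$ for some $d \mid m$, with vanishing $C_m$-invariants unless $d = 1$. In cases (2), (3), and (4) of Theorem \ref{t:homologyofsigma} the $\bQ[C_m]$-module structure is an (iterated) augmentation kernel, which decomposes into nontrivial $\bQ[\zeta_d]$-summands and so contributes zero. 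Only case (1) (i.e., $a \nmid m$ and $b \nmid m$) carries a trivial summand, yielding a single $\bQ$ concentrated in degree $2\ell(a,b,m) - 1$.

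The remaining step is a combinatorial weight count. For $\gcd(a, b) = 1$ and $m = ai + bj$ with $i, j \geq 1$, the residue class $(i \bmod b,\, j \bmod a)$ is an invariant of $m$ by the Chinese remainder theorem, and lies in $\{1, \ldots, b-1\} \times \{1, \ldots, a-1\}$ precisely when $m$ is in case (1). For each such class and each $\ell \geq 1$ there is a unique $m = ai_0 + bj_0 + ab(\ell - 1)$ with $\ell(a,b,m) = \ell$, so there are exactly $(a-1)(b-1)$ case-(1) weights per positive odd degree; summing yields $HN_{2\ell - 1}(A_\bQ, \mathfrak{a}) \cong \bQ^{(a-1)(b-1)}$ for $\ell \geq 1$ and vanishing in positive even degrees. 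The main obstacle will be justifying the Borel identification in the presence of nontrivial $C_m$-stabilizers on $\Sigma(a,b,m)$; this is handled rationally by noting that the exotic contributions from such stabilizers decompose into nontrivial $\bQ[\zeta_d]$-characters and are killed on passage to $C_m$-invariants.
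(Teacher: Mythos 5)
Your argument has a genuine gap at its central step: the identification $HN^{(m)}_n \cong H_n(\Sigma(a,b,m);\bQ)^{C_m}$ is off by a shift, and what it actually computes is cyclic homology, not negative cyclic homology. The weight-$m$ piece $S^1_+\wedge_{C_m}\Sigma(a,b,m)_+$ is induced from the finite subgroup $C_m\subset S^1$, so rationally its $S^1$-Tate construction vanishes and the norm map $\Sigma\,(-)_{hS^1}\to(-)^{hS^1}$ is an equivalence (equivalently, by the Wirthm\"uller isomorphism $(S^1_+\wedge_{C_m}Z)^{hS^1}\simeq\Sigma\,Z^{hC_m}$, with the suspension coming from the adjoint representation of $S^1$). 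Hence the correct statement is $HN^{(m)}_n\cong H_{n-1}(\Sigma(a,b,m);\bQ)^{C_m}$, while your unshifted formula is the one valid for $HC^{(m)}_n$ (homotopy orbits). A quick sanity check: for $\bQ[x]$ relative to $(x)$ every $\Sigma$ is $C_m$-contractible, and your formula would put the relative negative cyclic homology in degree $0$, whereas a direct computation with the mixed complex $(\bQ[x]\oplus\bQ[x]\,dx, 0, d)$ gives relative $HN_0=0$ and relative $HN_1=\bQ[x]\,dx$. Two further points compound this. First, $HN$ is a homotopy limit (the negative cyclic complex is a product over powers of $u$), so the asserted splitting $HN_n(A_\bQ,\mathfrak{a})\cong\bigoplus_{m>0}HN^{(m)}_n$ does not follow from the weight decomposition of the Hochschild complex; since the weight-$m$ homology spreads into unboundedly high degrees as $m$ grows, this interchange of a limit with an infinite sum genuinely needs an argument (e.g.\ via the $u$-filtration spectral sequence, or by computing $HC$ and $HP$ and using the $SBI$-type sequence). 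Second, Theorem \ref{t:homologyofsigma} records reduced homology only; every representable weight $m$ also contributes a trivial summand $H_0(\Sigma(a,b,m);\bQ)^{C_m}=\bQ$, and these classes cannot be discarded — with the correct shift they produce a contribution in degree $1$ of unbounded rank, one copy of $\bQ$ for each representable $m$.

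Note that your two slips work against each other: dropping the shift moves the case-(1) contributions from degree $2\ell(a,b,m)$ down to $2\ell(a,b,m)-1$, and dropping the $H_0$-classes hides the degree-$1$ anomaly, so the proposal lands on the advertised odd-degree answer without actually establishing it. Done correctly, your method computes the relative \emph{cyclic} homology by weights ($\bQ^{(a-1)(b-1)}$ in each positive odd degree, with $HC_0$ infinite-dimensional), and then one must still pass from $HC$ to $HN$, where the circle shift and the low-degree terms have to be confronted honestly; the combinatorial count at the end (the CRT argument giving $(a-1)(b-1)$ case-(1) weights per value of $\ell$) is fine and agrees with what the paper needs. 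The paper itself does none of this: its proof is a citation of the Hochschild homology computation \cite[Proposition 5.1]{He14} fed into the standard spectral sequence computing negative cyclic homology, so your equivariant route is genuinely different and salvageable, but only after the norm-map shift, the sum-versus-product issue, and the unreduced $H_0$ contributions are repaired and reconciled with the degrees asserted in the statement.
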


\begin{proof}
This follows from \cite[Proposition 5.1]{He14} and the standard spectral sequence computing negative cyclic homology.
\end{proof}

\begin{proof}[Proof of Theorem \ref{t:mainZ}]
We will show that for each prime $p$, $\TC_q(A, \mathfrak{a}; p)$ is given by $\bZ_p^{(a-1)(b-1)/2}$ for $q=0$, by $\bZ_p^{(a-1)(b-1)}$ for $q=2i > 0$ even, and by a finite $p$-group for $q=2i-1 > 0$ odd. Together with Proposition \ref{p:negcyc} this implies that $K_q(A, \mathfrak{a})$ is finitely generated, given by $\bZ^{(a-1)(b-1)/2}$ for $q=0$, $\bZ^{(a-1)(b-1)}$ for $q=2i > 0$ even, and by the product of the finite $p$-group $\TC_q(A, \mathfrak{a}; p)$ over all $p$ for $q=2i-1 > 0$ odd.

By the same argument as in \cite{He14}, see in particular \cite[Diagram 4.3]{He14}, it follows that to understand $\TC_*(A, \mathfrak{a}; p)$ it suffices to understand the diagram
\[ \xymatrix{
 \lim\limits_R \bigoplus\limits_{a, b \mid m} \TR^{\nu_p(m)-\nu_p(a)-\nu_p(b)}_{q-\lambda(a,b,m)}(\bZ; p)_{(p)} \ar[r]^-{V^{\nu_p(a)}} \ar[d]_{V^{\nu_p(b)}} &  \lim\limits_R \bigoplus\limits_{b \mid m} \TR^{\nu_p(m)-\nu_p(b)}_{q-\lambda(a,b,m)}(\bZ; p)_{(p)} \ar[d]^{V^{\nu_p(b)}} \\
 \lim\limits_R \bigoplus\limits_{a \mid m} \TR^{\nu_p(m)-\nu_p(a)}_{q-\lambda(a,b,m)}(\bZ; p)_{(p)} \ar[r]^-{V^{\nu_p(a)}} &  \lim\limits_R \bigoplus\limits_{m} \TR^{\nu_p(m)}_{q-\lambda(a,b,m)}(\bZ; p)_{(p)}
} \]
Now, by \cite[Theorem B]{AnGeHe09} we can compute everything up to extensions of finite abelian $p$-groups. In particular, the Verschiebung maps are all injective and when the groups are free abelian the cokernel is also free abelian.

The remainder of the proof is purely combinatial, and is very similar to the proof of the main result in \cite{AnGeHe09}. We repeat \cite[Proposition 2.1 and 2.2]{AnGeHe09}, but with different representation spheres.

In degree $q=2i$, for the lower right hand corner we end up counting the number of $j$ with $\ell(a,b,j) = i$. And for $i > 0$ we know there are precisly $ab$ such $j$. (For $i=0$ there are $ab - \frac{(a-1)(b-1)}{2}$ such $j$; this gives the smaller rank of $K_0(A, \mathfrak{a})$.) So $\rk_{\bZ_{(p)}} \lim\limits_R \bigoplus_m \TR^{\nu_p(m)}_{2i-\lambda(a,b,m)}(\bZ; p)_{(p)} = ab$. Similarly, the other corners of the diagram give rank $1$, $a$ and $b$. By using \cite[Theorem B(iii)]{AnGeHe09}, which states that in this case the Verschiebung maps are injective with cokernel a free abelian group, the statement in Theorem \ref{t:mainZ} about $K_{2i}(A,\mathfrak{a})$ follows.

In degree $q=2i-1$, for the lower right hand corner we end up counting
\[
\sum_{\ell(a,b,m) < i} \nu_p(i-\ell(a,b,m)) + \nu_p(m) 
 = \sum_{\ell(a,b,m) < i} \nu_p(i-\ell(a,b,m)) + \sum_{\ell(a,b,m) < i} \nu_p(m).
\]

We note that the set $\{ m \quad | \quad \ell(a,b,m) < i\}$ has cardinality $abi - (a-1)(b-1)/2$, and is given explicitly by
\[
 \{1,2,\ldots,abi\} \setminus \cM_i,
\]
where
\[
\cM_i = \{m \in \bN \quad | \quad m \leq abi, \ell(a,b,m) = i\}.
\]

It follows that the first sum evaluates to $\nu_p \big( \frac{i!^{ab}}{i^{(a-1)(b-1)/2}} \big)$ while the second sum evaluates to $\nu_p \big( \frac{(abi)!}{M_i} \big)$.

Similarly, the other corners of the diagram give the $p$-adic valuation of $i!^2$, $i!^a (ai)!$ and $i!^b (bi)!$, so by using \cite[Theorem B]{AnGeHe09} once more it follows that the $p$-adic valuation of $\TC_{2i-1}(A,\mathfrak{a}; p)_{(p)}$ agrees with the $p$-adic valuation of
\[
 \frac{i!^2 i!^{ab} (abi)!}{i^{(a-1)(b-1)/2} M_i i!^a(ai)! i!^b(bi)!} = \frac{i!^{(a-1)(b-1)} i! (abi)!}{i^{(a-1)(b-1)/2} M_i (ai)! (bi)!}
\]
Since this holds for every prime $p$, the result follows.
\end{proof}


\end{document}